\documentclass[aip,cha,reprint,superscriptaddress,floatfix]{revtex4-2}

\usepackage[T1]{fontenc}
\usepackage[utf8]{inputenc}
\usepackage{lmodern}
\usepackage{microtype}
\usepackage{amsmath,amssymb,amsthm,mathtools}
\usepackage{bm}
\usepackage{graphicx}
\usepackage{booktabs}
\usepackage{xcolor}
\usepackage{hyperref}
\usepackage{placeins}

\hypersetup{colorlinks=true,linkcolor=blue,citecolor=blue,urlcolor=blue}
\providecommand{\Description}[1]{}

\newtheorem{theorem}{Theorem}
\newtheorem{proposition}[theorem]{Proposition}
\newtheorem{corollary}[theorem]{Corollary}
\theoremstyle{definition}
\newtheorem{definition}[theorem]{Definition}
\theoremstyle{remark}

\newcommand{\Neff}{N_{\mathrm{eff}}}
\newcommand{\aeff}{a_{\mathrm{eff}}}
\newcommand{\rhostar}{\rho^{\star}}
\newcommand{\cstar}{c^{\star}}
\newcommand{\cO}{\mathcal O}
\newcommand{\cC}{\mathcal C}
\newcommand{\cB}{\mathcal B}
\newcommand{\Rplus}{\mathbb R_{\ge 0}}

\begin{document}

\title{Scale Partitioning by Incremental Nested Entropy:\ A Measure-Oriented Theory of Multiscale Structure}

\author{Abd AlRahman R. AlMomani}
\affiliation{Embry-Riddle Aeronautical University, Prescott, Arizona, USA}
\date{\today}

\begin{abstract}
Across complex systems science, networks, spatial structures, populations, spectra, and dynamical flows are often studied through object-level magnitudes such as connectivity, influence, frequency, geometric isolation, spectral strength, or deformation. Determining whether these values contain distinct scales usually requires a chosen cutoff, a prescribed number of groups, or an assumed prevalence. We introduce Scale Partitioning by Incremental Nested Entropy (SPINE), a deterministic framework that discovers scale structure without those inputs. The first central theorem shows that the complete and potentially heterogeneous pattern of preceding values can be replaced by two interpretable descriptors: the effective number of contributing components and their characteristic scale. This reduction yields an exact criterion for when the next value creates a new scale. When many components contribute, the constant \(e\) emerges as the universal limiting balance between increasing diversity and increasing dominance, not as a fitted or user-selected threshold. A second theorem proves that the same criterion determines exactly how much relative change each detected boundary can withstand when its two sides are moved toward one another. 
By following successive transitions through the ordered data, SPINE discovers a data-supported number of scale levels and returns one unresolved scale when the measure supports no separation.
Independently located numerical transitions agree with the theory to numerical precision across markedly different value patterns, while the stability prediction remains accurate at later heterogeneous boundaries where neighboring-gap approximations fail badly. Controlled experiments recover two, three, and four generated scales without being supplied their number once the scales are sufficiently separated. In a double-gyre flow, SPINE identifies a high-expansion structure occupying \(12.85\%\) of the domain without prescribing a retained percentile. An accompanying public software implementation and cross-platform reproducibility specification translate the theory into a transparent workflow for geometric, categorical, network, spectral, and dynamical data.
\end{abstract}

\maketitle

\section{Introduction}
\label{sec:introduction}

\textbf{Many complex systems display observable properties across distinct scales, yet identifying where one scale gives way to another often requires a prescribed threshold, a chosen number of groups, or additional assumptions about the underlying distribution. Here we show that such transitions can instead be identified from how information accumulates as an ordered measure is progressively enlarged. Building on an accumulated-prefix entropy construction introduced in our earlier work, we develop a general multiscale theory in which a candidate measure-scale boundary occurs when the nested entropy changes from nondecreasing to decreasing across a strict measure gap. This transition is governed by the entropy-effective state of the complete preceding prefix, rather than by an arbitrary adjacent-value threshold. The result is an exact critical relation from which the constant \(e\) emerges in the large-effective-size limit; the same critical surface determines the aligned perturbation margin of a detected boundary. The resulting framework, Scale Partitioning by Incremental Nested Entropy (SPINE), provides a deterministic way to expose multiple entropy-resolvable scales in scientific measures without prescribing their number, while keeping the physical meaning of those scales tied to the measure chosen for the problem.}

Scientific observations frequently provide a measure of magnitude, influence, deformation, frequency, energy, distance, or importance for each object in a finite system. The difficult question is often not how to rank those values, but whether the ordered measure values contain distinct scales and, if so, where one scale ends and another begins. A threshold answers this question only after a cut has been supplied or estimated. Many clustering formulations introduce a representation model, a distance, or a prescribed number of groups. An anomaly detector may instead construct a score whose interpretation depends on a decision rule or an assumed prevalence. The SPINE framework addresses a different problem: given a scientifically meaningful nonnegative measure, determine whether its ordered values contain local entropy-supported transitions between multiplicative scales, locate those transitions, and quantify their separation. A declared orientation is then used to interpret the resulting strata. The theory is local, deterministic, and derived from Shannon entropy rather than from a parameterized probability model. Throughout this work, a \emph{scale boundary} means such an entropy-supported transition in the declared measure.

The word \emph{scale} is therefore used in a specific measure-oriented sense. Suppose that a collection of objects has already been mapped to a nonnegative scalar quantity \(m_i\). The scientific meaning of \(m_i\) is supplied by the application: it may be a neighborhood distance, a singular value, a node degree, a finite-time expansion factor, a frequency, or another nonnegative measure. SPINE does not infer this measure from raw data. It asks whether progressively larger ordered values alter the normalized allocation sufficiently to produce entropy-supported transitions in the nested Shannon profile. The resulting strata are scales of the declared measure. Their interpretation as typical, atypical, coherent, dominant, rare, or otherwise scientifically distinguished is an application-level statement. Meaningful measure construction remains a scientific modeling decision, while the entropy transition is the mathematical problem considered here.

The accumulated-prefix entropy construction underlying SPINE was first introduced in our 2020 study of frequency-ranked symptom states \cite{almomani2020symptoms}. That earlier construction ordered observed state frequencies as \(F_1\leq\cdots\leq F_{n_u}\), repeatedly normalized the first \(i\) frequencies, and evaluated the Shannon entropy of each accumulated prefix. The maximum of that sequential entropy curve was then used to define a frequency cutoff separating states associated with typical and atypical patient sets. That paper explicitly left a deeper connection to asymptotic typicality and the optimality of the cutoff for future work. The present study returns to that construction, removes its application-specific dependence on occurrence frequency, and develops the missing local scale theory: the exact one-step transition criterion, its entropy-effective state variables, the finite-size appearance of \(e\), multiscale candidate boundaries, and an exact aligned perturbation margin.

Shannon entropy itself is classical, as is its grouping identity and its interpretation as uncertainty of a normalized allocation \cite{shannon1948,coverthomas2006}. Exponentiating Shannon entropy produces an effective number of equally weighted states, an interpretation that appears naturally in Hill numbers and related diversity measures \cite{hill1973,jost2006}. The contribution of the present work is therefore not the Shannon functional, the grouping recurrence, or the effective-number transform individually. It is the general theory obtained when the ascending accumulated-prefix construction \cite{almomani2020symptoms} is expressed in entropy-effective coordinates and its entropy-supported transitions are treated as measure-scale events. The resulting nested entropy profile does not estimate differential entropy and is distinct from cumulative entropy functionals based on a distribution or survival function \cite{rao2004cumulative,dicrescenzo2009cumulative}.

There are also neighboring methods for ordered scalar hierarchy and normalized-prefix entropy that address different objectives. The head/tail breaks method recursively partitions heavy-tailed scalar data about the mean and determines a hierarchy through repeated head subdivision \cite{jiang2013headtail}; Jenks natural breaks instead optimizes within-class homogeneity for a prescribed class count \cite{jenks1967}. More recently, Top-H decoding introduced an entropy-constrained procedure for language-model sampling in which token probabilities are sorted in descending order and progressively normalized over selected prefixes \cite{potraghloou2025toph}. Although this construction also evaluates entropy over probability prefixes, it addresses a distinct problem and differs in ordering direction, scientific object, stopping criterion, and purpose. In contrast, SPINE uses ascending accumulated-prefix entropy to characterize local measure-scale transitions and derives the associated entropy-effective transition criterion and its finite-size behavior.

This measure-oriented viewpoint connects naturally to our earlier work on entropy for continuous data. Geometric partition entropy introduced a geometry-aware coarse graining for continuous state spaces \cite{diggans2022}; Boltzmann-Shannon interaction entropy combined frequency and geometric information into a normalized continuous-variable quantity \cite{diggans2023}; and generalized geometric partition entropy extended this measure-oriented construction to multidimensional entropy and mutual-information estimation in the presence of informative outliers \cite{diggans2025}. These methods construct information quantities from the measures associated with data-adaptive partitions of an observed state space. In particular, generalized geometric partition entropy provides one direct connection to the present framework: its partition defines a collection of coarse states \(A_i\), each carrying a nonnegative measure \(\mu(A_i)\). These coarse states may themselves be treated as the objects supplied to SPINE, with \(m_i=\mu(A_i)\), so that SPINE asks whether the distribution of measure across the states contains entropy-supported multiplicative scales. In this sense, the two constructions operate at successive levels: geometric partition methods provide one principled way to construct measure-bearing scientific objects from continuous data, while SPINE examines the intrinsic scale structure of the resulting measures. More generally, SPINE is not restricted to geometric partitions and can operate on any collection of scientific objects for which an appropriate nonnegative measure can be defined.

A related methodological perspective appears in information-theoretic approaches to structure discovery, where conditional information is used to determine whether candidate components contribute independently to an observed system. Entropic regression applies this principle to the identification of governing dynamical terms \cite{almomani2020}, while optimal causation entropy has been used to recover interaction structure and Boolean functions from data \cite{sun2022}. SPINE addresses a different inference problem: no governing equation, interaction network, or causal structure is sought. Instead, information is evaluated along the ordered accumulation of a prescribed nonnegative measure to determine whether the measure itself contains resolvable scale structure. The commonality is therefore methodological rather than algorithmic: information relations are used to identify structure that is not determined by magnitude alone.

The SPINE framework is distinct from established thresholding, clustering, outlier detection, and change-point formulations. Classical thresholding selects a cut to divide a scalar field or histogram \cite{otsu1979}; entropy itself has also been used to choose histogram thresholds by optimizing class entropies \cite{kapur1985}. Clustering addresses grouping in a data space and spans formulations with prescribed group counts as well as density-based methods whose number of returned clusters is data dependent \cite{jain2010,ester1996}. Robust outlier procedures identify observations inconsistent with a bulk model or robust location and scale \cite{tukey1977,hampel1974}; density and isolation methods construct other notions of local abnormality \cite{breunig2000,liu2008,chandola2009}; and information-theoretic atypicality provides another route to data discovery and anomaly detection through coding ideas \cite{hostmadsen2019}. Change-point methods seek changes along an externally meaningful sequence through a segment cost and search formulation \cite{killick2012,truong2020}. SPINE does not replace these formulations. Its input is an already defined scalar measure, and its object of inference is a transition in the multiplicative scale of that ordered measure. A returned stratum may later be interpreted as atypical, coherent, dominant, rare, or otherwise scientifically distinguished, but that semantic interpretation is separate from the scale partition itself.

The theoretical difficulty is that the entropy change caused by admitting each new measure value appears, at first, to depend on the complete composition of the preceding prefix. Importantly, two prefixes can have the same arithmetic mean yet distribute their total measure very differently, and their normalized entropies can therefore respond differently to the same incoming value. A useful scale theory requires a state reduction that preserves this one-step entropy response without retaining the full prefix vector. 


The central result of SPINE is precisely such a reduction. For a prefix with total measure \(S\) and Shannon entropy \(E\), define the entropy-effective size \(\Neff=2^E\) and the entropy-effective scale \(\aeff=S/\Neff\). The interpretation of \(\Neff\) follows the standard interpretation of exponentiated Shannon entropy as the effective number of equally weighted states \cite{hill1973,jost2006}. The direction of the next entropy increment depends on the complete prefix only through \(\Neff\) and the incoming measure relative to \(\aeff\). This yields a unique finite-size critical ratio \(\rhostar(\Neff)\). The resulting transition criterion has three consequences that organize the paper.

First, \(\rhostar>e\) for every finite positive-entropy prefix, and \(\rhostar\to e\) as the effective size grows. The constant \(e\) therefore enters as a derived asymptotic balance point, not as a heuristic separation threshold. Second, transitions from nondecreasing to decreasing entropy across strict measure gaps define candidate boundaries that compose into an unknown number of measure strata without prescribing that number. Third, the same entropy-effective state governs robustness: the excess of an incoming scale above its critical value determines an exact relative perturbation radius under aligned erosion of the scale gap. Boundary formation and aligned stability are therefore two consequences of the same local geometry.

The numerical experiments are used to expose these theoretical properties rather than to select algorithmic rules. They show that entropy-effective normalization collapses heterogeneous prefixes onto a single finite-size relation, that the same local state predicts the stability of first and later multiscale boundaries, and that successive local transitions can recover multiple generated scale levels without being supplied their number. A finite-time deformation example then converts the abstract critical ratio into a physically interpretable condition on accumulated logarithmic separation. Extended proofs, the optional global selector, sensitivity studies, additional applications, further numerical results, and boundary cases are provided in the Supplementary Material.

The paper proceeds as follows. Section~\ref{sec:nested} defines nested entropy for an ordered measure, derives the exact recurrence, and introduces the entropy-effective state. Section~\ref{sec:critical} derives the entropy-effective transition criterion, relates arithmetic and entropy-effective normalization through the entropy deficit, establishes the finite-size behavior and appearance of \(e\), and identifies when the critical transition is compatible with the ascending order constraint. Section~\ref{sec:partition} turns entropy-supported transitions in the nested profile into multiscale partitions. Section~\ref{sec:stability} derives the exact aligned boundary-stability radius and proves that, for an actual candidate under aligned erosion, the strict entropy-decrease condition fails before the adjacent measure gap closes. Section~\ref{sec:results} presents the focused numerical evidence and the dynamical application, followed by the broader interpretation and limitations in Sec.~\ref{sec:discussion}.

\section{Nested entropy of an ordered measure and the entropy-effective state}
\label{sec:nested}

Let \(\cO_N=\{o_1,\ldots,o_N\}\) be a finite set of objects, and let \(\mu:\cO_N\to\Rplus\) be a declared scientific measure, with \(m_i=\mu(o_i)\). The map \(\mu\) assigns a nonnegative weight to each object; it is not a probability distribution. The measure may carry physical units and may arise from geometry, dynamics, counts, spectral decomposition, network structure, or another application-specific construction. Probability enters only after normalization of a prefix. Let \(m_{(1)}\le\cdots\le m_{(N)}\) denote a stable nondecreasing ordering of the measure values, with object identities carried through the ordering, and let \(S_n=\sum_{j=1}^n m_{(j)}\) be the cumulative measure of the first \(n\) ordered objects. 

Whenever \(S_n>0\), the first \(n\) ordered values induce normalized weights \(p_j^{(n)}=m_{(j)}/S_n\). These weights sum to one and therefore define a finite probability vector on the prefix, but their meaning is purely measure-normalized. Unless \(\mu\) is itself an occurrence frequency, \(p_j^{(n)}\) should not be read as the probability that object \(o_{(j)}\) occurs. Rather, it is the fraction of accumulated prefix measure carried by that object. This distinction is useful in applications because the same entropy construction can act on measures with very different scientific meanings.

\begin{definition}[Incremental nested entropy]
\label{def:nested}
The nested entropy profile is the sequence \(E_1,\ldots,E_N\) obtained by evaluating Shannon entropy on each normalized ordered prefix,
\begin{equation}
E_n=-\sum_{j=1}^{n}p_j^{(n)}\log_2 p_j^{(n)}.
\label{eq:nested-entropy}
\end{equation}
If \(S_n=0\), we set \(E_n=0\). Zero-mass terms contribute zero by continuity. For \(n\ge2\), the local entropy increment is \(\Delta_n=E_n-E_{n-1}\).
\end{definition}

The profile in Eq.~\eqref{eq:nested-entropy} is a sequence of entropies of different normalized allocations, not an entropy of rank and not a histogram entropy of the original sample. Ordering determines which object enters next, while renormalization determines how the accumulated measure is redistributed after that entry. If all \(n\) positive values in a prefix are equal, then the normalized allocation is uniform and \(E_n=\log_2 n\). An exactly homogeneous measure scale therefore produces a strictly increasing profile. At the opposite extreme, an incoming value that captures nearly all of the enlarged prefix measure drives the normalized allocation toward a single dominant component and can reduce entropy sharply. The sign of each local entropy increment is determined by the competition between these two tendencies.

This competition is explicit in the one-step recurrence, which is the Shannon grouping identity specialized to the ascending nested-prefix construction \cite{shannon1948,coverthomas2006}. Let \(q_n=m_{(n)}/S_n\) be the normalized mass of the incoming value after it has entered the prefix, and let \(h_2(q)=-q\log_2q-(1-q)\log_2(1-q)\) denote binary entropy.

\begin{theorem}[One-step entropy recurrence]
\label{thm:recurrence}
For \(n\ge2\) with \(S_n>0\),
\begin{equation}
\begin{aligned}
E_n
&=(1-q_n)E_{n-1}+h_2(q_n),\\
\Delta_n
&=h_2(q_n)-q_nE_{n-1}.
\end{aligned}
\label{eq:recurrence}
\end{equation}
\end{theorem}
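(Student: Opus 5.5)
The plan is to write the enlarged allocation \(p^{(n)}\) as a two-level grouping and apply the Shannon grouping identity directly. The \(n\) components of the prefix split into the block of the first \(n-1\) ordered values, carrying total normalized mass \(S_{n-1}/S_n=1-q_n\), and the singleton incoming value, carrying mass \(q_n\).

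First, I will establish the key algebraic fact. For \(j\le n-1\),
\[
p_j^{(n)}=\frac{m_{(j)}}{S_n}=\frac{S_{n-1}}{S_n}\cdot\frac{m_{(j)}}{S_{n-1}}=(1-q_n)\,p_j^{(n-1)}.
\]
This holds whenever \(S_{n-1}>0\). Substituting it into Eq.~\eqref{eq:nested-entropy} and using \(\log_2\bigl((1-q_n)p_j^{(n-1)}\bigr)=\log_2(1-q_n)+\log_2 p_j^{(n-1)}\), the first \(n-1\) terms give
\[
-(1-q_n)\sum_j p_j^{(n-1)}\log_2 p_j^{(n-1)}-(1-q_n)\log_2(1-q_n)\sum_j p_j^{(n-1)}.
\]
Because \(\sum_j p_j^{(n-1)}=1\), this equals \((1-q_n)E_{n-1}-(1-q_n)\log_2(1-q_n)\). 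The incoming term contributes \(-q_n\log_2 q_n\), and the two logarithmic pieces combine into \(h_2(q_n)\). That yields the first line of Eq.~\eqref{eq:recurrence}. Subtracting \(E_{n-1}\) then gives \(\Delta_n=h_2(q_n)-q_nE_{n-1}\) immediately.

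The only real obstacle is the degenerate bookkeeping, which I will handle by cases.

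\begin{itemize}
\item \emph{Zero-mass terms.} When some \(m_{(j)}=0\), the convention \(0\log 0=0\) makes the factorization step valid termwise, since both sides vanish.
\item \emph{\(S_{n-1}=0\) while \(S_n>0\).} The first \(n-1\) ordered values are all zero, so \(q_n=1\). The convention \(E_{n-1}=0\) from Definition~\ref{def:nested} then gives \(E_n=0=(1-1)\cdot 0+h_2(1)\), using \(h_2(1)=0\). The formula holds even though \(p^{(n-1)}\) is undefined.
\item \emph{\(q_n=0\).} Then \(S_n=S_{n-1}\), the allocation is unchanged, and \(h_2(0)=0\), so the formula holds.
\end{itemize}

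I will note briefly that ordering plays no role in this identity. Only the fact that the prefix of length \(n\) extends the prefix of length \(n-1\) by one element is used.
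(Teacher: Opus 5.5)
Your proposal is correct and follows the paper's proof essentially line for line. Like the paper, you factor the first \(n-1\) normalized masses as \((1-q_n)p_j^{(n-1)}\), expand the entropy of \(((1-q_n)\bm p^{(n-1)},q_n)\), and handle \(S_{n-1}=0\) separately via \(q_n=1\) and \(h_2(1)=0\); your extra \(q_n=0\) case is harmless, since the general computation already covers it and it cannot arise under the ascending order when \(S_n>0\).
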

\begin{proof}
If \(S_{n-1}=0\), then \(q_n=1\) and both identities hold with \(E_{n-1}=E_n=h_2(1)=0\). Otherwise, after the new value enters, each of the first \(n-1\) normalized masses is multiplied by the common factor \(1-q_n\), while the new component has normalized mass \(q_n\). Expanding the Shannon entropy of the vector \(((1-q_n)\bm p^{(n-1)},q_n)\) gives the first identity. Subtracting \(E_{n-1}\) gives the second. A closed-form identity equivalent to the recurrence is given in Supplementary Note~S1.1.
\end{proof}

Equation~\eqref{eq:recurrence} isolates the scale-transition mechanism without introducing any fitted parameter. The binary entropy term \(h_2(q_n)\) is the diversification contribution from admitting another component. The contraction term \(q_nE_{n-1}\) is the entropy lost because all previously accumulated normalized masses are compressed by the new component. When \(q_n\) is small, diversification dominates. When \(q_n\) is sufficiently large, concentration dominates. The sign of \(\Delta_n\) records the balance.

The recurrence also clarifies the zero-entropy boundary case. If a positive prefix has \(E_{n-1}=0\), its normalized measure is concentrated on one positive component. Admitting any finite additional positive value creates at least two positive normalized masses, so the new entropy is strictly positive. Consequently, an entropy decline cannot occur from a zero-entropy prefix. The nontrivial transition criterion developed below therefore applies only when the effective size exceeds one.

\begin{proposition}[Positive scale invariance]
\label{prop:scale-invariance}
For any \(\alpha>0\), replacing every measure value by \(\alpha m_i\) leaves the nested entropy profile, all candidate boundary ranks, and the induced scale partition unchanged.
\end{proposition}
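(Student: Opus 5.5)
The plan is to show that every object the partition is built from is invariant under \(m_i\mapsto\alpha m_i\), working from the ordering up to the partition.

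\textbf{Step 1: the ordering is unchanged.} Multiplication by \(\alpha>0\) is strictly increasing on \(\Rplus\), so \(\alpha m_i\le\alpha m_k\) if and only if \(m_i\le m_k\). Ties are preserved as well, so the stable nondecreasing ordering assigns the same object to each rank \((j)\). Hence \(m_{(j)}\) becomes \(\alpha m_{(j)}\) with identities unchanged, and \(S_n\) becomes \(\alpha S_n\).

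\textbf{Step 2: the profile is unchanged.} Since \(\alpha>0\), the condition \(S_n=0\) is preserved, so the convention \(E_n=0\) applies on exactly the same prefixes. When \(S_n>0\), the factor \(\alpha\) cancels in \(p_j^{(n)}=\alpha m_{(j)}/(\alpha S_n)\), so every normalized prefix vector is identical. By Eq.~\eqref{eq:nested-entropy}, each \(E_n\) is therefore unchanged, and so is each \(\Delta_n\). The same holds for \(q_n\), and for \(\Neff=2^{E_n}\). The scale \(\aeff=S/\Neff\) is multiplied by \(\alpha\), but the ratio of an incoming value to \(\aeff\) is invariant. This also gives consistency with Theorem~\ref{thm:recurrence}.

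\textbf{Step 3: boundaries and partition are unchanged.} A candidate boundary is defined by two ingredients:
\begin{itemize}
\item a change in the sign of the increments from nondecreasing to decreasing, which is invariant by Step 2;
\item a strict measure gap \(m_{(n-1)}<m_{(n)}\), which is invariant because \(\alpha>0\) preserves strict inequality.
\end{itemize}
The candidate ranks are therefore identical. The scale partition is the composition of these ranks with the fixed ordering, so it is identical too.

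\textbf{Main obstacle.} The difficulty is not computational. Later sections may attach further quantities to boundaries, such as critical ratios, perturbation radii, or a global selector. I would argue these are all built from \(E_n\), \(q_n\), or dimensionless ratios of measure values. They are therefore degree-zero homogeneous in the data and inherit the invariance. I would state this explicitly, so the proof stays valid for whatever formal boundary definition appears in Sec.~\ref{sec:partition}.
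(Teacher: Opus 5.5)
Your proof is correct and follows the same route as the paper. Positive rescaling preserves the stable order and cancels in every normalized prefix weight, so each \(E_n\) is unchanged, and the candidate conditions depend only on those entropies and on strict order gaps, which \(\alpha>0\) also preserves. Your handling of ties and of the \(S_n=0\) convention is more careful than the paper's one-line argument. Your closing claim that the global selector is degree-zero homogeneous is not needed, since the proposition concerns the candidate-induced partition, and it would have to be checked against the SPINE-S objective rather than asserted.
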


\begin{proof}
Positive rescaling preserves the order and multiplies every prefix sum by \(\alpha\). Hence each normalized weight \(p_j^{(n)}\) is unchanged, and so is every entropy in Eq.~\eqref{eq:nested-entropy}. The candidate conditions introduced in Sec.~\ref{sec:partition} depend only on these entropies and on strict order gaps, which are also preserved by positive rescaling.
\end{proof}

Positive scale invariance separates the theory from the units used to express \(\mu\). A distance measured in meters or centimeters, a singular value multiplied by a common calibration factor, or a frequency vector expressed in proportional units produces the same partition. SPINE concerns relative measure scale, not a universal calibration of raw magnitude.

\subsection{The entropy-effective state}
\label{sec:effective-state}

The recurrence shows that the next entropy increment depends on the previous entropy and on the normalized mass of the incoming value. For scale interpretation, however, a dimensionless fraction \(q\) alone is not enough: the same fraction can arise from different total prefix measures. The critical reduction becomes transparent after combining the entropy and total measure into two quantities with complementary meanings.

\begin{definition}[Entropy-effective state]
\label{def:effective-state}
For a prefix with positive total measure, its entropy-effective size and entropy-effective scale are
\begin{equation}
\Neff=2^{E_n},
\qquad
\aeff=\frac{S_n}{\Neff}.
\label{eq:effective-state}
\end{equation}
\end{definition}

The effective size \(\Neff\) is the number of equally weighted states having the same Shannon entropy as the actual normalized prefix, consistent with the equivalent-number interpretation of entropy \cite{hill1973,jost2006}. Thus \(1\le\Neff\le n\), with \(\Neff=n\) only for a uniform positive prefix. The effective scale \(\aeff\) has the same physical units as the original measure. It is the common measure that \(\Neff\) equal effective components would carry if they preserved the same total prefix measure \(S_n\). In practical terms, \(\Neff\) describes how many effective components are present, while \(\aeff\) describes the characteristic measure carried by one such component.

The effective scale also has an intrinsic representation that does not refer to a hypothetical
uniform system.

\begin{proposition}[Measure-weighted geometric representation]
\label{prop:geometric-mean}
Let \(J_n=\{j\le n:m_{(j)}>0\}\) denote the positive support of the prefix; then we have
\begin{equation}
\aeff
=
\prod_{j\in J_n}m_{(j)}^{p_j^{(n)}}.
\label{eq:geometric-mean}
\end{equation}
\end{proposition}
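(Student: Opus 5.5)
The plan is to take logarithms and compare $\log_2\aeff$ with the logarithm of the proposed product. The identity should then reduce to the definition of Shannon entropy together with $\sum_{j\in J_n}p_j^{(n)}=1$. Throughout we assume $S_n>0$, which is the setting in which Definition~\ref{def:effective-state} is made. In that case $J_n$ is nonempty and every exponent $p_j^{(n)}$ with $j\in J_n$ is strictly positive, so the right-hand side of Eq.~\eqref{eq:geometric-mean} is a well-defined positive number.

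First, restrict the entropy sum to the positive support. By Definition~\ref{def:nested}, zero-mass terms contribute zero, so
\begin{equation*}
E_n=-\sum_{j\in J_n}p_j^{(n)}\log_2 p_j^{(n)},\qquad \sum_{j\in J_n}p_j^{(n)}=1 .
\end{equation*}
Next, substitute $p_j^{(n)}=m_{(j)}/S_n$ into the logarithm for $j\in J_n$, where $m_{(j)}>0$:
\begin{equation*}
E_n=-\sum_{j\in J_n}p_j^{(n)}\log_2 m_{(j)}+\log_2 S_n\sum_{j\in J_n}p_j^{(n)}
=\log_2 S_n-\sum_{j\in J_n}p_j^{(n)}\log_2 m_{(j)} .
\end{equation*}

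Rearranging gives $\log_2 S_n-E_n=\sum_{j\in J_n}p_j^{(n)}\log_2 m_{(j)}$. Since $\Neff=2^{E_n}$, the left-hand side equals $\log_2(S_n/\Neff)=\log_2\aeff$. Exponentiating base $2$ then yields Eq.~\eqref{eq:geometric-mean}.

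There is no real analytic obstacle. The only point needing care is the treatment of zero masses. Terms $m_{(j)}=0$ would produce $\log 0$ when $p\log p$ is split into $p\log m-p\log S$, so the split must be carried out only after the sum has been restricted to $J_n$. Those terms are also excluded from the product, since they would give the ambiguous expression $0^0$. Finally, I would note the interpretation: $\aeff$ is the measure-weighted geometric mean of the positive prefix values. This follows because the exponents are nonnegative and sum to one, which also explains why $\aeff$ carries the units of $\mu$.
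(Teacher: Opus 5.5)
Your proof is correct and is essentially the paper's argument: restrict the entropy sum to the positive support, expand to get $E_n=\log_2 S_n-\sum_{j\in J_n}p_j^{(n)}\log_2 m_{(j)}$, identify the left side of $\log_2 S_n-E_n$ with $\log_2\aeff$, and exponentiate. The paper additionally makes explicit that when $\mu$ carries physical units the logarithms are taken relative to a fixed reference unit, which cancels because $\sum_{j\in J_n}p_j^{(n)}=1$. Your closing remark about the units of $\aeff$ implicitly relies on this point.
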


\begin{proof}
If the measure carries physical units, take all logarithms relative to one arbitrary positive reference unit; the reference cancels because the normalized weights sum to one. Set \(L_n=\sum_{j\in J_n}p_j^{(n)}\log_2m_{(j)}\) in those common units. Expanding Eq.~\eqref{eq:nested-entropy} gives \(E_n=\log_2S_n-L_n\). Hence \(\log_2\aeff=\log_2S_n-E_n=L_n\), and exponentiation gives Eq.~\eqref{eq:geometric-mean}.
\end{proof}

Equation~\eqref{eq:geometric-mean} is useful conceptually because it shows that \(\aeff\) is not an arbitrary rescaling introduced to simplify the critical relation. It is a measure-weighted geometric mean of the positive prefix values, with the same normalized weights that define the entropy. Large measure values receive proportionally larger weights, while the logarithmic averaging preserves multiplicative scale. This makes \(\aeff\) a natural reference when the scientific distinction of interest is multiplicative rather than additive.

The difference between raw and effective prefix size can be quantified by the entropy deficit \(\delta_n=\log_2n-E_n\ge0\). Since \(\Neff=2^{E_n}\), one has \(n/\Neff=2^{\delta_n}\). A uniform prefix has zero deficit and \(\Neff=n\); heterogeneous prefixes have positive deficit and behave, for the next entropy transition, as smaller effective systems. This distinction becomes essential in Sec.~\ref{sec:critical}, where the complete dependence on prefix composition is reduced to \(\Neff\) and \(\aeff\).

\section{The entropy-effective critical transition}
\label{sec:critical}

Consider a positive ordered prefix ending at rank \(n\), with effective state defined by Eq.~\eqref{eq:effective-state}, and a positive incoming value \(c\), without yet imposing the ordering constraint \(c\ge m_{(n)}\). Define the dimensionless incoming ratio \(\rho=c/\aeff\). Since \(S_n=\Neff\aeff\), the normalized mass carried by the incoming value after admission is \(q=\rho/(\Neff+\rho)\). The central task is to determine the value of \(\rho\) for which the entropy of the enlarged prefix is exactly equal to \(E_n\). Ordered admissibility of this formal equality is considered in Sec.~\ref{sec:admissibility}.

\begin{theorem}[Entropy-effective transition criterion]
\label{thm:critical}
Let \(x=\Neff>1\). There exists a unique critical ratio \(\rhostar(x)>1\) satisfying
\begin{equation}
\rhostar\ln\rhostar
=
\bigl(x+\rhostar\bigr)
\ln\!\left(1+\frac{\rhostar}{x}\right).
\label{eq:critical-law}
\end{equation}
For an incoming value \(c=\rho\aeff\), entropy increases when \(\rho<\rhostar(x)\), is unchanged at equality, and decreases when \(\rho>\rhostar(x)\). The detailed composition of the prefix enters this one-step decision only through \(x=2^{E_n}\).
\end{theorem}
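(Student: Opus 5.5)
The plan is to start from the one-step recurrence of Theorem~\ref{thm:recurrence} and rewrite it entirely in the coordinates \((x,\rho)\). The enlarged prefix has total measure \(S_n+c=\Neff\aeff+\rho\aeff\), so the incoming normalized mass is \(q=\rho/(x+\rho)\) and \(1-q=x/(x+\rho)\). Theorem~\ref{thm:recurrence} gives \(\Delta=h_2(q)-qE_n\). I will convert to natural logarithms using \(E_n\ln 2=\ln x\). Substituting these expressions and collecting the \(\ln(x+\rho)\) terms, the computation should reduce to
\begin{equation*}
\Delta\,\ln 2=\ln\!\left(1+\frac{\rho}{x}\right)-\frac{\rho}{x+\rho}\ln\rho .
\end{equation*}
Multiplying by the positive factor \(x+\rho\) shows that the sign of \(\Delta\) equals the sign of
\begin{equation*}
F_x(\rho)=(x+\rho)\ln\!\left(1+\frac{\rho}{x}\right)-\rho\ln\rho,\qquad \rho>0.
\end{equation*}
The zero set of \(F_x\) is exactly Eq.~\eqref{eq:critical-law}. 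Because only \(x\) and \(\rho\) appear in \(F_x\), this step also gives the final claim: the prefix composition enters only through \(x=2^{E_n}\), with \(\aeff\) fixing \(\rho\).

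The core of the argument is a one-variable sign analysis of \(F_x\) for fixed \(x>1\). First, the derivative simplifies to
\begin{equation*}
F_x'(\rho)=\ln\!\left(\frac{1}{\rho}+\frac{1}{x}\right).
\end{equation*}
This is strictly decreasing in \(\rho\) and vanishes exactly at \(\rho_0=x/(x-1)\), which lies in \((1,\infty)\) because \(x>1\). Hence \(F_x\) is strictly increasing on \((0,\rho_0]\) and strictly decreasing on \([\rho_0,\infty)\). Second, \(F_x(\rho)\to 0\) as \(\rho\to0^+\), since \(\rho\ln\rho\to0\) and \(x\ln(1+\rho/x)\to0\). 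Combined with the monotonicity, this gives \(F_x>0\) on \((0,\rho_0]\).

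Third, I need \(F_x(\rho)\to-\infty\) as \(\rho\to\infty\). I will expand \((x+\rho)\ln(1+\rho/x)=(x+\rho)\bigl[\ln\rho-\ln x+\ln(1+x/\rho)\bigr]\). This yields
\begin{equation*}
F_x(\rho)=-\rho\ln x+x\ln\rho+O(1),
\end{equation*}
which diverges to \(-\infty\) precisely because \(\ln x>0\). This is the one place where the hypothesis \(x>1\) is essential; it is consistent with the earlier remark that no entropy decline can occur from a zero-entropy prefix.

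Strict decrease on \([\rho_0,\infty)\) from a positive value to \(-\infty\), together with the intermediate value theorem, then gives a unique root \(\rhostar(x)>\rho_0>1\). The same monotonicity yields \(F_x>0\) for \(\rho<\rhostar\) and \(F_x<0\) for \(\rho>\rhostar\), which is the stated trichotomy for \(\Delta\).

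I expect the main obstacle to be bookkeeping rather than anything conceptual. The first concern is getting the log-base conversion and the cancellation in \(\Delta\ln 2\) right. The second is the asymptotic estimate at infinity, where the leading \(\rho\ln\rho\) terms must cancel exactly so that the \(-\rho\ln x\) term is what controls the sign.
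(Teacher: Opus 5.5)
Your proposal is correct and follows essentially the same route as the paper. It uses the same reduction of the increment to a function of \((x,\rho)\) alone (your \(F_x\) is the negative of the paper's), the same derivative with its single stationary point at \(x/(x-1)\), and the same unique sign change on the monotone branch beyond that point. The only difference is that you anchor the sign with \(F_x(0^+)=0\) instead of the paper's evaluation at \(\rho=1\); this has the small advantage of covering \(0<\rho<1\) in the trichotomy explicitly.
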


\begin{proof}
Apply the recurrence in Eq.~\eqref{eq:recurrence} to the enlarged prefix. Substituting \(q=\rho/(x+\rho)\) and \(E_n=\log_2x\) reduces the entropy increment to
\[
E_{n+1}-E_n
=
-\frac{F_x(\rho)}{(x+\rho)\ln2},
\]
where \(F_x(\rho)=\rho\ln\rho-(x+\rho)\ln(1+\rho/x)\) and \(\ln\) denotes the natural logarithm. Hence the entropy-neutral condition is \(F_x(\rho)=0\), which is Eq.~\eqref{eq:critical-law}. The derivative and second derivative are \(\partial F_x/\partial\rho=\ln[x\rho/(x+\rho)]\) and \(\partial^2F_x/\partial\rho^2=x/[\rho(x+\rho)]>0\). Thus \(F_x\) is strictly convex and has a single stationary point, at \(\rho=x/(x-1)\). Moreover, \(F_x(1)<0\), while \(F_x(\rho)\to\infty\) as \(\rho\to\infty\). The increasing branch therefore crosses zero exactly once for \(\rho>1\). The sign of \(E_{n+1}-E_n\) is the opposite of the sign of \(F_x\), giving the three stated regimes. The complete algebraic reduction is given in Supplementary Note~S1.3.
\end{proof}

Theorem~\ref{thm:critical} is the central state reduction. For the purpose of deciding the direction of the next entropy increment, a heterogeneous prefix is transition-equivalent to an effective system of \(x\) equal components, each of measure \(\aeff\). Two prefixes that have different raw sizes, different arithmetic means, and different internal distributions produce the same one-step transition whenever they have the same effective size and are challenged by the same incoming ratio \(\rho\). The theory therefore replaces a high-dimensional prefix description by an effective state consisting of a dimensionless diversity coordinate \(x\) and a dimensional scale coordinate \(\aeff\).

The relevant comparison is not generally the adjacent raw ratio \(m_{(n+1)}/m_{(n)}\). The terminal value \(m_{(n)}\) records only one point of the prefix, whereas \(\aeff\) summarizes the entire normalized allocation through Eq.~\eqref{eq:geometric-mean}. In a homogeneous prefix, \(m_{(n)}=\aeff\), but after earlier scales have entered, these quantities can differ substantially. This distinction is responsible for both the universal critical collapse and the multiscale stability result below.

The role of prefix heterogeneity can be stated exactly in terms of the entropy deficit introduced in Sec.~\ref{sec:effective-state}. Let \(\overline m_n=S_n/n\) denote the arithmetic mean measure of the prefix, and let \(\cstar=\rhostar(\Neff)\aeff\) be the formal entropy-neutral incoming value.

\begin{corollary}[Arithmetic and entropy-effective normalization]
\label{cor:arithmetic-normalization}
Let \(\overline m_n=S_n/n\) denote the arithmetic mean of the prefix,
let \(\cstar=\rhostar(\Neff)\aeff\) denote the formal entropy-neutral
incoming value, and let \(\delta_n\) denote the entropy deficit, \(\delta_n=\log_2n-E_n\ge0\). 
Then the critical ratio relative to the arithmetic mean is:
\begin{equation}
\frac{\cstar}{\overline m_n}
=
2^{\delta_n}\rhostar(\Neff),
\qquad
\delta_n=\log_2n-E_n.
\label{eq:arithmetic-link}
\end{equation}
\end{corollary}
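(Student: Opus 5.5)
The identity in Eq.~\eqref{eq:arithmetic-link} is a direct algebraic consequence of Definition~\ref{def:effective-state}, so the plan is simply to rewrite both sides in terms of \(S_n\), \(n\), and \(E_n\). I will assume \(S_n>0\) and \(\Neff>1\), so that \(\aeff\), \(\overline m_n\), and \(\rhostar(\Neff)\) are defined. By Theorem~\ref{thm:critical}, \(\rhostar(\Neff)\) exists and is unique in this regime; the corollary uses it only as a well-defined number.

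First, I write \(\cstar=\rhostar(\Neff)\aeff\) and substitute \(\aeff=S_n/\Neff=S_n\,2^{-E_n}\). Dividing by \(\overline m_n=S_n/n\) makes the total measure \(S_n\) cancel, leaving
\[
\frac{\cstar}{\overline m_n}=\rhostar(\Neff)\,\frac{n}{\Neff}.
\]
Second, I invoke the relation noted after Proposition~\ref{prop:geometric-mean}: \(n/\Neff=2^{\log_2 n-E_n}=2^{\delta_n}\). Combining the two steps gives Eq.~\eqref{eq:arithmetic-link}.

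The nonnegativity \(\delta_n\ge0\) is the standard bound \(E_n\le\log_2 n\). The maximal Shannon entropy of a probability vector on \(n\) components is attained by the uniform vector. Here it suffices that the prefix has at most \(n\) positive components; zero-mass terms contribute nothing by Definition~\ref{def:nested}. Consequently \(\Neff\le n\), consistent with Sec.~\ref{sec:effective-state}.

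There is no real obstacle. The only care needed is with the domain of validity, namely \(S_n>0\) and \(\Neff>1\), since \(\rhostar\) is defined only for \(\Neff>1\). I would also point out the interpretation: because \(2^{\delta_n}\ge1\), with equality exactly for a uniform positive prefix, the arithmetic-mean critical ratio is always at least \(\rhostar(\Neff)\) and grows with the heterogeneity of the prefix.
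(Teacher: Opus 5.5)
Your proposal is correct and follows essentially the same route as the paper: substitute \(\aeff=S_n/\Neff\), cancel \(S_n\) against \(\overline m_n=S_n/n\) to get \((n/\Neff)\rhostar(\Neff)\), and then use \(n/\Neff=2^{\delta_n}\). Your added remarks on the domain (\(S_n>0\), \(\Neff>1\)) and on \(\delta_n\ge0\) are accurate supplements that the paper leaves implicit.
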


\begin{proof}
Using \(\aeff=S_n/\Neff\), one obtains \(\cstar/\overline m_n=(n/\Neff)\rhostar\). Since \(n/\Neff=2^{\delta_n}\), Eq.~\eqref{eq:arithmetic-link} follows.
\end{proof}

Corollary~\ref{cor:arithmetic-normalization} separates two sources of variation that are otherwise mixed together. The critical-ratio function itself depends only on \(\Neff\). Arithmetic normalization introduces the entropy-deficit factor \(2^{\delta_n}\), so families with different internal heterogeneity need not share a common arithmetic threshold. Entropy-effective normalization removes this factor by construction. This is the theoretical reason for the numerical collapse in Sec.~\ref{sec:result-critical}.

\subsection{Finite-size behavior and the appearance of \texorpdfstring{\(e\)}{e}}
\label{sec:e}

The critical ratio is a finite-size function, not a fixed constant. Its large-effective-size limit nevertheless has a simple exact form and a direct interpretation.

\begin{theorem}[Finite-size behavior of the critical ratio]
\label{thm:finite-size}
For every finite \(x>1\), \(\rhostar(x)>e\), and \(\rhostar(x)\) decreases monotonically to \(e\) as \(x\to\infty\). As \(x\to\infty\), its first finite-size correction is
\begin{equation}
\rhostar(x)=e+\frac{e^2}{2x}+O(x^{-2}).
\label{eq:first-correction}
\end{equation}
\end{theorem}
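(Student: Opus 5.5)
The plan is to work directly with the function \(F_x(\rho)=\rho\ln\rho-(x+\rho)\ln(1+\rho/x)\) from the proof of Theorem~\ref{thm:critical}. That proof shows \(F_x\) is strictly convex on \((0,\infty)\) with \(F_x(1)<0\), and that \(\rhostar(x)\) is its unique zero above \(1\). By convexity, the set \(\{\rho>0:F_x(\rho)<0\}\) is an interval. It contains \(1\), and its right endpoint is \(\rhostar(x)\). Hence, to prove \(\rhostar(x)>e\), it suffices to show \(F_x(e)<0\). With \(t=e/x>0\) one gets
\[
F_x(e)=e-(x+e)\ln(1+e/x)=e\Bigl[1-\tfrac{(1+t)\ln(1+t)}{t}\Bigr].
\]
This is negative because \((1+t)\ln(1+t)>t\) for \(t>0\): the difference vanishes at \(t=0\) and has derivative \(\ln(1+t)>0\).

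For monotonicity I will use implicit differentiation of \(F_x(\rhostar(x))=0\), which is legitimate by the implicit function theorem. One partial derivative is \(\partial_x F_x(\rho)=\rho/x-\ln(1+\rho/x)\). It is strictly positive because \(\ln(1+s)<s\) for \(s>0\). For the other, \(\partial_\rho F_x>0\) at \(\rho=\rhostar\). To see this, note that a convex function that is negative at \(1\) and vanishes at \(\rhostar>1\) has \(F_x'(\rhostar)\ge(F_x(\rhostar)-F_x(1))/(\rhostar-1)>0\). Therefore
\[
\frac{d\rhostar}{dx}=-\frac{\partial_xF_x}{\partial_\rho F_x}<0,
\]
so \(\rhostar\) is strictly decreasing and bounded below by \(e\). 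It therefore has a limit \(L\ge e\). Because \(\rhostar\) is bounded on \([x_0,\infty)\), and because \((x+\rho)\ln(1+\rho/x)\to\rho\) uniformly on compact \(\rho\)-sets, passing to the limit in \(F_x(\rhostar(x))=0\) gives \(L\ln L=L\). Hence \(L=e\).

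For the correction in Eq.~\eqref{eq:first-correction}, expand \(\ln(1+s)=s-s^2/2+s^3/3-\cdots\) with \(s=\rho/x\). This gives
\[
(x+\rho)\ln(1+\rho/x)=\rho+\frac{\rho^2}{2x}+R(\rho,x),
\]
where \(|R|\le C\rho^3/x^2\) uniformly for \(\rho\) in a compact neighborhood of \(e\) and \(x\ge x_0\). The critical equation then reads \(G(\rho)=\rho^2/(2x)+R\), where \(G(\rho)=\rho\ln\rho-\rho\). Since \(G(e)=0\) and \(G'(e)=\ln e=1\), writing \(\rhostar=e+\varepsilon\) with \(\varepsilon\to0\) (known from the limit step) gives
\[
\varepsilon+O(\varepsilon^2)=\frac{(e+\varepsilon)^2}{2x}+O(x^{-2}).
\]
A first pass yields \(\varepsilon=O(1/x)\). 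Substituting back gives \(\varepsilon=e^2/(2x)+O(x^{-2})\).

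I expect the main obstacle to be the uniformity of this remainder bootstrap, i.e.\ making the \(O(x^{-2})\) honest. I would handle it by confining \(\rho\) to \([e,\rhostar(x_0)]\), which is justified by the monotonicity already proved. On that interval I would apply the Taylor remainder in Lagrange form to both \(G\) and \(\ln(1+s)\), so that all constants are explicit.
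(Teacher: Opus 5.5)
Your proof is correct and follows the paper's route: show \(F_x(e)<0\) via \((1+t)\ln(1+t)>t\), get \(d\rhostar/dx<0\) by implicit differentiation using \(\partial_xF_x=\rho/x-\ln(1+\rho/x)>0\) and \(\partial_\rho F_x>0\) at the root, pass to the limit to get \(L\ln L=L\), and expand the critical equation about \(\rho=e\) for large \(x\). Your convexity chord argument for \(\partial_\rho F_x(\rhostar)>0\) and your Lagrange-remainder bootstrap spell out what the paper compresses into ``expanding the implicit equation about \((x^{-1},\rho)=(0,e)\)''; the uniformity you worry about is not an obstacle, since \(|R|\le\rho^3/(6x^2)\) holds for all \(x,\rho>0\).
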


\begin{proof}
At \(\rho=e\), write \(y=e/x>0\). Then \(F_x(e)=x[y-(1+y)\ln(1+y)]<0\), because \((1+y)\ln(1+y)-y\) is zero at \(y=0\) and has strictly positive derivative for \(y>0\). The unique root therefore satisfies \(\rhostar(x)>e\). At the root, \(\partial F_x/\partial\rho>0\), while \(\partial F_x/\partial x=\rho/x-\ln(1+\rho/x)>0\). Implicit differentiation gives \(d\rhostar/dx<0\). The root is decreasing and bounded below by \(e\), so it has a finite limit \(L\ge e\). Since the root remains bounded, the right side of Eq.~\eqref{eq:critical-law} tends to \(L\), giving \(L\ln L=L\). The solution with \(L>1\) is \(L=e\). Expanding the implicit equation about \((x^{-1},\rho)=(0,e)\) gives Eq.~\eqref{eq:first-correction}. The analytic higher-order expansion through \(O(x^{-5})\) is derived in Supplementary Note~S1.4.
\end{proof}

The finite-size correction is not merely a formal asymptotic detail. A prefix with small effective size can require a critical ratio far above \(e\). For example, Eq.~\eqref{eq:critical-law} gives \(\rhostar(2)\approx6.807\), while \(\rhostar(10)\approx3.135\). The exact identity \(\rhostar(4)=4\) provides a useful finite-size anchor: substituting \(\rho=x\) into Eq.~\eqref{eq:critical-law} reduces the equality to \(x\ln x=2x\ln2\), whose positive solution is \(x=4\). The numerical consequences of this finite-size behavior are shown in Fig.~\ref{fig:critical}.

The appearance of \(e\) has a specific meaning. Adding another component tends to increase entropy because the support of the normalized allocation becomes richer. At the same time, the incoming component forces all previous normalized masses to contract. If the incoming measure is large enough relative to the effective scale of the prefix, concentration dominates and entropy decreases. The critical ratio is the unique balance point between these effects. As \(x\to\infty\), the right side of Eq.~\eqref{eq:critical-law} approaches \(\rho\), so the balance reduces to \(\rho\ln\rho=\rho\). For \(\rho>1\), this gives \(\ln\rho=1\) and hence \(\rho=e\).

This interpretation should not be taken to mean that \(e\) is a universal scale-separation threshold. The constant is universal only within the asymptotic form of this nested Shannon transition. Finite prefixes are governed by \(\rhostar(x)>e\), and the incoming measure is compared with \(\aeff\), not generally with the immediately preceding value. Consequently, an adjacent raw ratio smaller than \(e\) does not by itself rule out a transition, and an adjacent raw ratio larger than \(e\) does not by itself establish one. The relevant quantity is the complete entropy-effective state of the prefix.

\subsection{Ordered admissibility of the formal equality}
\label{sec:admissibility}

As noted in the derivation of Theorem~\ref{thm:critical}, the transition criterion was obtained for an arbitrary positive incoming value \(c\), without imposing the ordering constraint \(c\ge m_{(n)}\) that SPINE actually encounters. The formal entropy-neutral value \(\cstar=\rhostar(\Neff)\aeff\) is therefore compatible with the ordered continuation only if it is no smaller than the current terminal value.

\begin{proposition}[Ordered admissibility]
\label{prop:ordered-admissibility}
If \(\cstar\ge m_{(n)}\), the formal entropy equality lies within the admissible ordered continuation. If \(\cstar<m_{(n)}\), every admissible next value \(c\ge m_{(n)}\) lies on the entropy-decreasing side of Theorem~\ref{thm:critical}.
\end{proposition}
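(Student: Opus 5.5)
The proof reduces to the sign classification already established in Theorem~\ref{thm:critical}, restated in terms of the incoming value \(c\) rather than the ratio \(\rho\). Fix a positive prefix with \(\Neff=x>1\); the case \(E_n=0\) was already shown above to admit no entropy decline and is excluded. Since \(\aeff>0\), the map \(c\mapsto\rho=c/\aeff\) is a strictly increasing bijection of \((0,\infty)\). Theorem~\ref{thm:critical} therefore translates verbatim into three regimes: entropy increases for \(c<\cstar\), is unchanged at \(c=\cstar\), and decreases for \(c>\cstar\), where \(\cstar=\rhostar(x)\aeff\). The admissible ordered continuations are exactly the values \(c\ge m_{(n)}\). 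So the statement amounts to comparing the threshold \(\cstar\) with the left endpoint of the ray \([m_{(n)},\infty)\).

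\emph{First case, \(\cstar\ge m_{(n)}\).} Here \(\cstar\) itself belongs to \([m_{(n)},\infty)\). Taking \(c=\cstar\) as a next ordered value is therefore legitimate, and by the equality regime it leaves \(E_{n+1}=E_n\). This is what is meant by saying that the formal equality lies within the admissible continuation. I would also record the consequence that both sides are realized: the decreasing side \((\cstar,\infty)\) is always admissible, and the nondecreasing side contains the admissible interval \([m_{(n)},\cstar]\).

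\emph{Second case, \(\cstar<m_{(n)}\).} Every admissible \(c\) satisfies \(c\ge m_{(n)}>\cstar\), so \(\rho=c/\aeff>\rhostar(x)\). The decreasing regime of Theorem~\ref{thm:critical} then gives \(E_{n+1}<E_n\) for every such \(c\).

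There is no real obstacle here. The only points needing care are the strict monotonicity of the rescaling, which requires \(\aeff>0\) and hence \(S_n>0\), and the restriction to \(x>1\), so that \(\rhostar\) is defined. I would add a remark that this case can actually occur, for instance when a prefix is strongly heterogeneous so that \(m_{(n)}\) greatly exceeds \(\aeff\). I would not attempt to characterize exactly when it occurs, since the proposition only asserts the dichotomy.
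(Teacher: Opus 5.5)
Your proof is correct and follows essentially the paper's own reasoning. The proposition is an immediate restatement of Theorem~\ref{thm:critical} under the strictly increasing map \(c\mapsto c/\aeff\): you compare \(\cstar\) with the left endpoint of the admissible ray \([m_{(n)},\infty)\), which is what the paper means when it says the ordered sequence ``has already moved beyond the formal equality point.'' Your side remarks on excluding \(\Neff=1\) and on the second case arising for heterogeneous prefixes also match the paper's discussion, which adds that this case cannot occur for a homogeneous prefix, since there \(\aeff=m_{(n)}\) and \(\rhostar>1\).
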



The second case, \(\cstar<m_{(n)}\), does not invalidate the transition criterion. It means that the ordered sequence has already moved beyond the formal equality point, so every admissible continuation lies on the entropy-decreasing side. 
This cannot occur for a homogeneous prefix, where \(\aeff=m_{(n)}\) and hence \(\cstar=\rhostar(\Neff)\,m_{(n)}>m_{(n)}\), but it
is common in heterogeneous prefixes that already contain strongly elevated values; the detailed admissibility study is given in Supplementary Notes~S1.7 and S3.1. 
The distinction is useful because the critical relation describes an entropy balance, while the ordering constraint determines whether
that balance is still reachable by the next observed measure value.

\section{From entropy-supported transitions to multiscale partitions}
\label{sec:partition}

The transition criterion determines the sign of a single local entropy increment. SPINE uses successive transitions from nondecreasing to decreasing entropy to partition the ordered measure. The direct construction, denoted SPINE-C, retains every entropy-supported candidate. An optional selected mode, SPINE-S, can subsequently choose a global subset of the candidate set according to a declared penalized representation objective, but the scale-transition theory itself does not depend on that selector.

A candidate boundary must satisfy two distinct conditions. First, the nested entropy must stop rising and begin to fall. Second, the ordered measure must actually increase across the proposed break. The first condition identifies a local transition from nondecreasing to decreasing entropy; the second prevents an arbitrary split inside a block of equal measure values.

\begin{definition}[Candidate scale boundary]
\label{def:candidate}
For \(N\ge3\), the SPINE-C candidate set is
\begin{equation}
\begin{split}
\cC=\{i\in\{2,\ldots,N-1\}:{}&E_i\ge E_{i-1},\\
&E_i>E_{i+1},\ m_{(i)}<m_{(i+1)}\}.
\end{split}
\label{eq:candidate}
\end{equation}
For \(N<3\), \(\cC\) is empty.
\end{definition}

The asymmetry in the entropy comparisons is deliberate. The non-strict condition on the left admits the final point of an exact entropy plateau before a decline, while the strict condition on the right requires that a decline actually occur after rank \(i\). The strict measure gap places the boundary between distinct represented measure values. Hence a candidate at rank \(i\) has a precise local meaning: adding \(m_{(i)}\) does not lower the prefix entropy, but adding \(m_{(i+1)}\) does, and the two ordered values belong to different measure levels rather than to the same tie block. No candidate is defined at the first or final rank because both neighboring entropy comparisons are required.

The transition criterion explains this rule directly. At a candidate \(i\), adding \(m_{(i)}\) does not decrease entropy, whereas the next value has an entropy-effective ratio above \(\rhostar(2^{E_i})\) relative to the prefix ending at \(i\). The boundary therefore marks an observed entry into the entropy-decreasing side of the local balance. This is why the candidate set is defined from the entropy profile rather than from a raw gap statistic.

Let the retained boundaries be \(\cB=\{b_1<\cdots<b_K\}\subseteq\cC\), with \(b_0=0\) and \(b_{K+1}=N\). These boundaries define \(G=K+1\) consecutive scale strata, with stratum \(k\) containing ordered ranks from \(b_{k-1}+1\) through \(b_k\). If \(K=0\), the complete object system is one unresolved measure scale. In SPINE-C, \(\cB=\cC\), so the number of strata is determined entirely by the entropy-supported candidate set. No number of groups, contamination fraction, or target prevalence is supplied.

The output is a partition of objects, not merely a list of threshold values. Because object identities are carried through the stable ordering, every stratum maps back to the original object system. This distinction matters when several scale transitions are present. A single cut can separate only two sides of a scalar ordering, whereas a sequence of entropy-supported boundaries can represent several distinct measure levels without converting the problem into repeated binary thresholding.

Relative separation coordinates can also be assigned to the resulting strata. Each retained boundary has a positive entropy-drop depth obtained from the decline following its local maximum, and cumulative normalized drop depth maps the ordered strata from separation level zero to one. These coordinates summarize how strongly successive scale transitions contribute to the total discovered separation. The declared orientation is then applied to interpret the ordered strata scientifically. Upper orientation treats larger measure scales as increasingly atypical relative to the typical stratum; lower orientation treats smaller measure scales as increasingly atypical; bilateral orientation measures departure from a declared anchor stratum in both directions. The typical stratum has score exactly zero. These scores are deterministic coordinates on the discovered scale structure, not probabilities, posterior anomaly scores, or estimates of occurrence likelihood. Exact scoring definitions are given in Supplementary Note~S2.3.

Orientation is therefore a statement about the scientific meaning of the measure, not a numerical switch that changes the partitioning theory. For a nearest-neighbor distance, large values may encode geometric isolation and upper orientation may be natural. For frequency, small values may encode rarity and lower orientation may be appropriate. For a spectral or deformation measure, large values may represent dominant modes or strong expansion. The same partition can have different scientific meanings under different orientations, so the orientation should be declared based on the application rather than inferred from the entropy profile.

SPINE-S is an optional representation layer for applications that require a global subset of the entropy-supported candidates. It restricts all possible boundaries to \(\cC\), fits each candidate-delimited segment by weighted isotonic regression, and minimizes a declared penalized representation objective. The selector uses classical weighted isotonic regression \cite{barlow1972,robertson1988,best1990}, together with a penalty motivated by coding and model-selection ideas \cite{rissanen1978,schwarz1978}. This objective is separate from the Shannon transition theorem. Its penalty is neither likelihood-derived nor assumed to vary monotonically with model complexity. Because the search is restricted to entropy-supported candidates, SPINE-S cannot introduce a boundary where no SPINE-C candidate exists. Its exact objective, dynamic programming algorithm, deterministic tie-breaking hierarchy, and penalty behavior are given in Supplementary Note~S2.4. All main numerical results below use SPINE-C unless stated otherwise.

\section{Stability of an entropy-supported boundary}
\label{sec:stability}

A detected boundary is more informative if its margin can be quantified. The critical surface provides such a margin without introducing a new statistical model or a new scale parameter. Let \(b\in\cC\) be a candidate boundary. Define the effective state of the prefix ending at \(b\) and the entropy-effective ratio of the incoming upper value by
\begin{equation}
x_b=2^{E_b},
\qquad
a_b=\frac{S_b}{x_b},
\qquad
\rho_b=\frac{m_{(b+1)}}{a_b}.
\label{eq:boundary-state}
\end{equation}
Because the candidate condition includes \(E_{b+1}<E_b\), Theorem~\ref{thm:critical} implies \(\rho_b>\rhostar(x_b)\). The amount by which \(\rho_b\) exceeds the critical ratio measures the local separation from criticality and, under the aligned perturbation defined below, determines how much erosion the boundary can absorb before the entropy decrease disappears.

We consider first an aligned relative perturbation that directly erodes the boundary. Every measure at ranks up to \(b\) is multiplied by \(1+\eta\), while every measure above \(b\) is multiplied by \(1-\eta\), with \(0\le\eta<1\). The complete lower prefix is therefore rescaled uniformly. Its normalized distribution, entropy, and effective size are unchanged; its effective scale is multiplied by \(1+\eta\), while the incoming upper value is multiplied by \(1-\eta\). This perturbation is deterministic and adversarial with respect to the local separation because it moves the two sides directly toward one another.

An exact stability statement for the same object partition also requires the ordered gap between \(m_{(b)}\) and \(m_{(b+1)}\) to remain open. For a generic cut, an order collision could occur before an entropy condition fails. A SPINE candidate has additional structure that rules this out.

\begin{proposition}[Candidate order margin]
\label{prop:order-margin}
For a candidate boundary \(b\),
\begin{equation}
\frac{m_{(b)}}{a_b}<\rhostar(x_b).
\label{eq:terminal-prefix-bound}
\end{equation}
Consequently, under aligned erosion, the strict right-hand entropy condition fails before the adjacent measure gap can close.
\end{proposition}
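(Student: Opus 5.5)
The plan is to prove the terminal bound \eqref{eq:terminal-prefix-bound} from the left-hand candidate condition \(E_b\ge E_{b-1}\), and then read off the ordering consequence directly from the aligned perturbation.

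\emph{Step 1: reduce to two scalars.} Work with the prefix ending at \(b-1\). Write \(x_1=2^{E_{b-1}}\), \(a_1=S_{b-1}/x_1\) and \(\rho_1=m_{(b)}/a_1\). The case \(E_{b-1}=0\) (so \(x_1=1\)) needs separate, easier treatment, and I will assume \(x_1>1\) here. By Theorem~\ref{thm:critical}, the condition \(E_b\ge E_{b-1}\) means exactly \(\rho_1\le\rhostar(x_1)\). The state at \(b\) is then an explicit function of \((x_1,\rho_1)\). Set \(q=\rho_1/(x_1+\rho_1)\). The recurrence \eqref{eq:recurrence} gives
\[
\ln x_b=(1-q)\ln x_1+h(q),
\]
with \(h\) the binary entropy in nats. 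Proposition~\ref{prop:geometric-mean}, applied to both prefixes, gives \(a_b=a_1^{1-q}m_{(b)}^{q}\), so that \(\rho_2:=m_{(b)}/a_b=\rho_1^{1-q}\). The goal therefore becomes the two-variable inequality \(F_{x_b}(\rho_2)<0\) whenever \(\rho_1\le\rhostar(x_1)\).

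\emph{Step 2: easy regimes.*} Since \(m_{(b)}\) is the largest value in the prefix, the weighted geometric mean gives \(a_1\le m_{(b)}\), hence \(\rho_1\ge1\) and \(1\le\rho_2\le\rho_1\).

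First take the boundary case \(\rho_1=\rhostar(x_1)\). Then \(E_b=E_{b-1}\), so \(x_b=x_1\). Because \(\rhostar>1\) and \(q>0\), we get \(\rho_2=\rhostar(x_1)^{1-q}<\rhostar(x_1)=\rhostar(x_b)\), which is the claim.

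Next take any \(\rho_1\) with \(\rho_2\le e\). Theorem~\ref{thm:finite-size} gives \(\rhostar(x_b)>e\ge\rho_2\), so the claim holds again.

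\emph{Step 3: the main obstacle.} What remains is the interior regime \(e<\rho_2\) with \(\rho_1<\rhostar(x_1)\). The difficulty is that \(x_b\ge x_1\) moves \(\rhostar\) downward, because \(\rhostar\) is decreasing in \(x\), while \(\rho_2\) also moves downward. The two effects must be compared.

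My first attempt is a monotonicity argument in \(\rho_1\) at fixed \(x_1\). Let \(G(\rho_1)=F_{x_b(\rho_1)}(\rho_2(\rho_1))\). We already know \(G<0\) at \(\rho_1=\rhostar(x_1)\). I would compute \(dG/d\rho_1\) using \(\partial_\rho F=\ln[x\rho/(x+\rho)]\) and \(\partial_xF=\rho/x-\ln(1+\rho/x)\) from the proofs above. The aim is to show that \(G\) cannot cross zero on \([1,\rhostar(x_1)]\). Equivalently, at any point where \(G=0\), the derivative must have the sign incompatible with \(G\) being negative at the right endpoint.

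If that computation becomes unwieldy, the fallback is a quantitative bound \(\rho_2\le\rho_1^{x_1/(x_1+\rho_1)}\). I would combine it with the expansion \(\rhostar(x)=e+e^2/(2x)+O(x^{-2})\) for large \(x_1\), and handle a bounded range of \(x_1\) by the same derivative sign check.

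\emph{Step 4: the consequence under erosion.} Under aligned erosion the prefix ending at \(b\) is rescaled uniformly. Hence \(x_b\) is unchanged, \(a_b\mapsto(1+\eta)a_b\), \(m_{(b)}\mapsto(1+\eta)m_{(b)}\) and \(m_{(b+1)}\mapsto(1-\eta)m_{(b+1)}\). The ratio \(m_{(b)}/a_b\) is therefore invariant and stays strictly below \(\rhostar(x_b)\).

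The right-hand entropy condition persists while \(\rho_b(1-\eta)/(1+\eta)>\rhostar(x_b)\). It first fails at the \(\eta\) where equality holds. At that \(\eta\), the perturbed upper value equals \(\rhostar(x_b)\,(1+\eta)a_b\), which by \eqref{eq:terminal-prefix-bound} is strictly larger than the perturbed terminal value \((1+\eta)m_{(b)}\). So the adjacent gap is still open when the entropy decrease disappears.
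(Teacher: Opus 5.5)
Your Steps 1, 2 and 4 are correct. The reduction \(\rho_2=\rho_1^{1-q}\), the endpoint case \(\rho_1=\rhostar(x_1)\), the case \(\rho_2\le e\), and the cross-multiplication under erosion all check out. However, Step 3, which you rightly flag as the crux, is only a plan. You do not carry out the sign analysis of \(dG/d\rho_1\) or the fallback. The fallback would also not work as described. An expansion with an unquantified \(O(x^{-2})\) remainder cannot by itself certify a strict inequality on any explicit range of \(x_1\). In addition, \(\rho_2\le\rho_1^{x_1/(x_1+\rho_1)}\) is not a bound; it is the identity \(\rho_2=\rho_1^{1-q}\) you already derived. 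As written, \eqref{eq:terminal-prefix-bound} is therefore proved only at \(\rho_1=\rhostar(x_1)\) and wherever \(\rho_2\le e\).

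The missing observation is that your interior regime is empty. The proof of Theorem~\ref{thm:critical} gives \(E_b-E_{b-1}=-F_{x_1}(\rho_1)/[(x_1+\rho_1)\ln2]\). This identity holds for all \(x_1\ge1\); the case \(S_{b-1}=0\) is excluded because it forces \(E_b=0\). The left candidate condition therefore reads \(\rho_1\ln\rho_1\le(x_1+\rho_1)\ln(1+\rho_1/x_1)\), and so
\[
\ln\rho_2=\frac{x_1}{x_1+\rho_1}\ln\rho_1\le\frac{x_1}{\rho_1}\ln\Bigl(1+\frac{\rho_1}{x_1}\Bigr)<1 .
\]
Hence \(\rho_2<e\). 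Since \(E_b>E_{b+1}\ge0\) forces \(x_b>1\), Theorem~\ref{thm:finite-size} gives \(\rhostar(x_b)>e\). Your second case in Step 2 then finishes the proof, including the deferred case \(x_1=1\), and even yields the stronger bound \(m_{(b)}/a_b<e\).

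The paper reaches the conclusion without passing through \(e\). Your \(q\) is exactly \(q_b=m_{(b)}/S_b\), and \(\rho_2=x_bq_b\). The recurrence turns \(E_b\ge E_{b-1}\) into \(h_2(q_b)/q_b\ge E_{b-1}\), and eliminating \(E_{b-1}\) with the same recurrence gives \(h_2(q_b)/q_b\ge E_b\). The function \(g(q)=h_2(q)/q\) is strictly decreasing, so \(q_b\le q_b^\star\), where \(g(q_b^\star)=\log_2x_b\). Since \(\rhostar(x_b)=x_bq_b^\star/(1-q_b^\star)\), this gives \(m_{(b)}/a_b=x_bq_b\le x_bq_b^\star<\rhostar(x_b)\).

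In both routes the left condition bounds the terminal ratio directly. There is no need to track how \(x_b\) and \(\rho_2\) co-vary with \(\rho_1\).
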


\begin{proof}
Let \(q_b=m_{(b)}/S_b\) and define \(g(q)=h_2(q)/q\) for \(q\in(0,1)\). The left candidate condition \(E_b-E_{b-1}\ge0\), together with Eq.~\eqref{eq:recurrence}, gives \(g(q_b)\ge E_{b-1}\). Using the same recurrence to eliminate \(E_{b-1}\) yields \(g(q_b)\ge E_b\). Let \(q_b^\star\) be the unique fraction satisfying \(g(q_b^\star)=E_b=\log_2x_b\). Since \(g\) is strictly decreasing, \(q_b\le q_b^\star\). The critical ratio corresponding to the prefix ending at \(b\) satisfies \(\rhostar(x_b)=x_bq_b^\star/(1-q_b^\star)\), while \(m_{(b)}/a_b=q_bx_b\). Hence \(m_{(b)}/a_b\le q_b^\star x_b<\rhostar(x_b)\), proving Eq.~\eqref{eq:terminal-prefix-bound}. The comparison of the entropy and order radii then follows by direct cross multiplication; the complete argument is given in Supplementary Note~S1.8.
\end{proof}

The proposition is more than a technical ordering check. It shows that a candidate carries a built-in asymmetry: the last value inside the lower prefix remains below the critical scale of that prefix, while the first value above the boundary lies above it. The entropy-critical value \(\rhostar(x_b)a_b\) therefore lies strictly inside the raw measure gap associated with an actual candidate.

\begin{theorem}[Exact aligned boundary-stability radius]
\label{thm:stability}
For a candidate boundary \(b\), the split between the same two object sets remains a candidate boundary under the aligned perturbation exactly for \(0\le\eta<\eta_b^\star\), where
\begin{equation}
\eta_b^\star
=
\frac{\rho_b-\rhostar(x_b)}{\rho_b+\rhostar(x_b)}.
\label{eq:stability-radius}
\end{equation}
At \(\eta=\eta_b^\star\), the strict entropy decline at the right side of the boundary becomes entropy-neutral while the adjacent measure gap remains open.
\end{theorem}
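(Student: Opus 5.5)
The plan is to check, one by one, that each of the three candidate conditions at rank \(b\) survives the aligned perturbation. Then I identify which condition fails first as \(\eta\) grows.

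\textbf{Left condition.} The perturbation multiplies every value at ranks \(\le b\) by the common factor \(1+\eta\). Every prefix ending at \(b-1\) or \(b\) is therefore rescaled uniformly, and by the argument of Proposition~\ref{prop:scale-invariance} its normalized weights and entropies are unchanged. So \(E_{b-1}\) and \(E_b\) are invariant, and the left condition \(E_b\ge E_{b-1}\) holds for every \(\eta\in[0,1)\).

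The ordering of the objects within the lower block is also preserved, and so is the ordering within the upper block. Hence the two object sets stay separated exactly as long as the adjacent gap stays open, that is, as long as
\[
(1+\eta)m_{(b)}<(1-\eta)m_{(b+1)}.
\]

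\textbf{Right condition.} The perturbed prefix ending at \(b\) has effective size \(x_b\) (unchanged) and effective scale \((1+\eta)a_b\). The incoming value is \((1-\eta)m_{(b+1)}\), so the perturbed incoming ratio is
\[
\rho_b(\eta)=\rho_b\,\frac{1-\eta}{1+\eta}.
\]
This is strictly decreasing and continuous in \(\eta\). By Theorem~\ref{thm:critical}, with \(x_b>1\) guaranteed because an entropy decline cannot occur from a zero-entropy prefix, the strict decline \(E_{b+1}<E_b\) holds iff \(\rho_b(\eta)>\rhostar(x_b)\). Solving
\[
\rho_b(1-\eta)>\rhostar(x_b)(1+\eta)
\]
gives exactly \(\eta<\eta_b^\star\), with \(\eta_b^\star\) as in Eq.~\eqref{eq:stability-radius}. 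At \(\eta=\eta_b^\star\) equality holds, and Theorem~\ref{thm:critical} gives entropy neutrality.

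\textbf{Gap condition.} It remains to show that the gap closes only at a larger radius, so that the entropy condition is the binding one. The gap closes at
\[
\eta_{\mathrm{ord}}=\frac{m_{(b+1)}-m_{(b)}}{m_{(b+1)}+m_{(b)}}.
\]
Dividing numerator and denominator by \(a_b\) gives
\[
\eta_{\mathrm{ord}}=\frac{\rho_b-r}{\rho_b+r},\qquad r=\frac{m_{(b)}}{a_b}.
\]
The map \(t\mapsto(\rho_b-t)/(\rho_b+t)\) is strictly decreasing in \(t\). Proposition~\ref{prop:order-margin} gives \(r<\rhostar(x_b)\), hence \(\eta_{\mathrm{ord}}>\eta_b^\star\).

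Therefore, for all \(\eta\le\eta_b^\star\) the gap remains strictly open and the left condition holds. For \(\eta<\eta_b^\star\) all three conditions hold, which proves the "if" direction.

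\textbf{Converse.} For \(\eta_b^\star\le\eta<\eta_{\mathrm{ord}}\) the same object split has \(\rho_b(\eta)\le\rhostar(x_b)\), so the strict decline fails. For \(\eta\ge\eta_{\mathrm{ord}}\) the order gap is closed or reversed, so the two object sets no longer form a strict-gap split of the ordered sequence. In both cases the split is not a candidate, which gives the "only if" direction.

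\textbf{Main obstacle.} The only delicate step is the comparison between \(\eta_{\mathrm{ord}}\) and \(\eta_b^\star\). It rests entirely on Proposition~\ref{prop:order-margin} and on the monotonicity of the Möbius map \(t\mapsto(\rho_b-t)/(\rho_b+t)\). A secondary care point is ties: the stable ordering must keep the lower block before the upper block while the gap is open, so that the ranks \(b\) and \(b+1\) still refer to the same objects.
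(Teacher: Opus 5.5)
Your proposal is correct and follows the paper's proof essentially step for step: \(x_b\) and the left comparison are invariant under uniform rescaling of the lower prefix, the perturbed ratio \(\rho_b(1-\eta)/(1+\eta)\) is compared against \(\rhostar(x_b)\) via Theorem~\ref{thm:critical}, and Proposition~\ref{prop:order-margin} shows that the adjacent gap closes only after the entropy equality is reached. The only difference is that you write out explicitly the order radius \(\eta_{\mathrm{ord}}=(\rho_b-r)/(\rho_b+r)\), its comparison with \(\eta_b^\star\) through the M\"obius monotonicity, and the converse direction, which the paper compresses into a ``direct cross multiplication'' deferred to the Supplementary Material.
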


\begin{proof}
Uniform rescaling leaves \(x_b\) unchanged and maps \(a_b\) to \((1+\eta)a_b\). The incoming value maps to \((1-\eta)m_{(b+1)}\), so its effective ratio becomes \(\rho_b(1-\eta)/(1+\eta)\). The strict right-hand entropy decline persists exactly while this ratio exceeds \(\rhostar(x_b)\). Solving equality gives Eq.~\eqref{eq:stability-radius}. Every prefix ending at or below \(b\) is uniformly rescaled, so the left candidate comparison remains unchanged. Proposition~\ref{prop:order-margin} guarantees that the adjacent measure gap remains open until after the entropy equality is reached. Thus Eq.~\eqref{eq:stability-radius} is the exact preservation radius of the same object split under the aligned perturbation.
\end{proof}

The radius has a transparent interpretation. If \(\rho_b\) lies only slightly above \(\rhostar(x_b)\), the boundary is near criticality and \(\eta_b^\star\) is small. If \(\rho_b\) is much larger than the critical ratio, the boundary tolerates correspondingly larger relative erosion. The radius therefore converts the binary statement that a boundary exists into a quantitative local margin.

For a homogeneous lower prefix of \(b\) equal positive measures and an upper-to-lower raw ratio \(r=m_{(b+1)}/m_{(b)}\), one has \(x_b=b\), \(a_b=m_{(b)}\), and \(\rho_b=r\). Theorem~\ref{thm:stability} then reduces to \(\eta^\star=[r-\rhostar(b)]/[r+\rhostar(b)]\). This is the special case in which the adjacent raw ratio and the entropy-effective ratio coincide. At later boundaries in a heterogeneous multiscale prefix, \(a_b\) generally differs from \(m_{(b)}\), and the entropy-effective ratio is the correct quantity.

Theorem~\ref{thm:stability} unifies boundary formation and aligned robustness. The critical surface \(\rho=\rhostar(x)\) determines when an incoming scale first produces a descending entropy increment. Once the boundary exists, the same surface determines how much aligned erosion it can tolerate before the entropy decline disappears. No independent robustness parameter is introduced. Detection and aligned stability are therefore two manifestations of one entropy-effective law.

More general perturbations need not rescale the lower prefix uniformly, so they can change both \(E_b\) and \(a_b\) in ways not captured by a single aligned ratio. A distribution-free sufficient guarantee can then be obtained by combining strict candidate margins with finite-alphabet entropy continuity bounds \cite{fannes1973,audenaert2007}. Such certificates answer a broader but different question: they guarantee candidate preservation for all perturbations satisfying a specified discrepancy bound, rather than locating the exact failure point for the aligned geometry. We place that general theorem and its numerical tightness analysis in Supplementary Notes~S1.9 and S3.3. The supplementary results also show that much of the conservatism of a generic certificate can come from bounding the induced total variation too crudely rather than from the entropy-continuity theorem itself.

\section{Numerical manifestations and a dynamical application}
\label{sec:results}


The experiments below expose consequences of the theory; none is used to tune the candidate definition, the critical ratio, or the stability formula. Where a predicted critical value is tested, the measured value is obtained by solving the entropy-neutral condition \(E_{n+1}=E_n\) numerically, so it is independent of the formula being tested. The main text retains only results bearing on the
critical relation, multiscale stability, recovery of an unknown number of scales, and one physical consequence; extended experiments, sensitivity studies, negative cases, and additional applications are reported in Supplementary Note~S3.

\subsection{Finite-size critical relation and entropy-effective collapse}
\label{sec:result-critical}

Figure~\ref{fig:critical} illustrates two consequences of Theorems~\ref{thm:critical} and \ref{thm:finite-size}: the finite-size approach to \(e\), and the result that \(\aeff\) removes the internal composition factor identified in Corollary~\ref{cor:arithmetic-normalization}. The exact critical root was evaluated over 900 logarithmically spaced effective sizes from \(1.05\) to \(10^7\). It remained above \(e\) throughout, with representative values \(\rhostar(2)=6.8069957581\), \(\rhostar(4)=4\), \(\rhostar(10)=3.1346674313\), and \(\rhostar(2000)=2.7201301392\). The first-order asymptotic approximation is already accurate for moderate effective sizes, while the higher-order expansion in Supplementary Note~S1.4 extends that accuracy to substantially smaller effective sizes.

\begin{figure*}[t]
\centering
\includegraphics[width=0.96\textwidth]{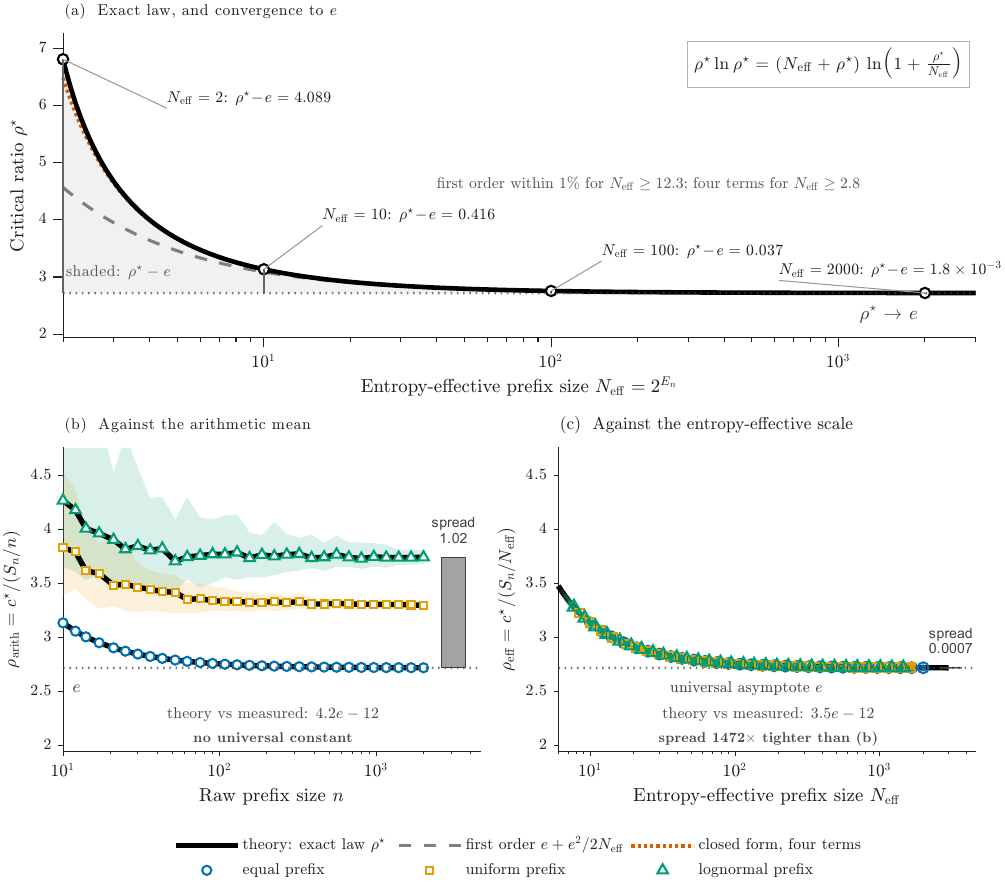}
\caption{\textbf{Finite-size critical relation and entropy-effective collapse.} (a) The exact critical ratio decreases toward \(e\) as \(\Neff\) grows; the finite-size correction is substantial for small effective sizes. (b) Critical incoming values normalized by the arithmetic prefix mean remain family dependent for equal, uniform, and lognormal prefixes. (c) The same independently located critical values normalized by \(\aeff\) collapse onto the theoretical critical-ratio function \(\rhostar(\Neff)\). Ribbons show the 10th to 90th percentiles over 80 realizations.}
\Description{Three panels show the finite-size critical ratio and the collapse obtained by entropy-effective normalization. The first panel approaches e from above. Arithmetic normalization leaves three prefix families separated, while entropy-effective normalization places them on the same curve.}
\label{fig:critical}
\end{figure*}

To test composition dependence, 7200 prefixes were generated from equal, uniform, and lognormal families over 30 raw prefix sizes and 80 realizations per family and size. For each prefix, \(\cstar\) was found by bisection from \(E_{n+1}(\cstar)=E_n\), without using Eq.~\eqref{eq:critical-law}. The measured root is therefore independent of the formula being tested. Under arithmetic normalization, the three families remain separated because Eq.~\eqref{eq:arithmetic-link} retains the entropy-deficit factor \(2^{\delta_n}\). At raw size \(n=2000\), the arithmetic-normalized critical ratios were approximately 2.7201, 3.2974, and 3.7405 for equal, uniform, and lognormal prefixes, respectively, giving a family spread of 1.020342.

The same measured values collapse when divided by \(\aeff\). At \(n=2000\), the three entropy-effective ratios were approximately 2.7201, 2.7205, and 2.7208, with spread \(6.93\times10^{-4}\). Relative to arithmetic normalization, the family spread is reduced by a factor of approximately 1472. More importantly, the individual points follow the theoretical curve at their own effective sizes: the maximum absolute discrepancy between the independently measured entropy-effective roots and \(\rhostar(\Neff)\) was \(3.485\times10^{-12}\). Additional gamma and Pareto families preserve the same behavior, as reported in Supplementary Note~S3.6.

The collapse is the central result of this experiment. It does not merely show that one normalization produces a visually tighter plot than another. Corollary~\ref{cor:arithmetic-normalization} predicts exactly why arithmetic scaling remains family dependent and why the effective scale removes that dependence. The experiment therefore supports the interpretation of \((\Neff,\aeff)\) as the state variables of the one-step transition rather than as post hoc descriptive statistics.

\subsection{The same critical surface controls multiscale stability}
\label{sec:result-stability}

Theorem~\ref{thm:stability} was first tested in the simpler two-scale setting and then at later boundaries in genuinely multiscale prefixes. In the two-scale study, the measured radii across 42 configurations spanning total sample size, upper-stratum fraction, and scale ratios from 3 to 50 agreed with the exact radius to a maximum relative error of \(1.05\times10^{-8}\). This agreement provides a numerical check of the special case in which the lower prefix is homogeneous and \(a_b=m_{(b)}\).

The more demanding test used 26 boundaries from 11 configurations with three and four scales in different arrangements. Later boundaries are preceded by heterogeneous prefixes that have already absorbed one or more lower scales. They therefore distinguish the entropy-effective reference scale from the adjacent raw value and directly test the local state reduction beyond the first boundary.

\begin{figure*}[t]
\centering
\includegraphics[width=0.87\textwidth]{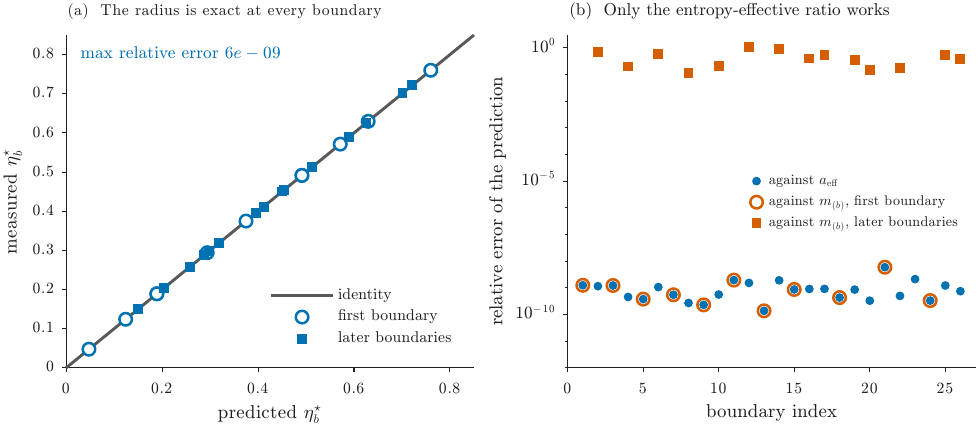}
\caption{\textbf{Local stability is governed by the entropy-effective prefix state.} (a) Measured boundary-loss radii under aligned erosion for 26 first and later boundaries agree with the prediction in Eq.~\eqref{eq:stability-radius}, with a maximum relative error of \(5.93\times10^{-9}\). (b) Replacing \(a_b\) with the adjacent lower value \(m_{(b)}\) is accurate at first homogeneous boundaries but produces relative errors of order one at later heterogeneous boundaries.}
\Description{Two panels compare measured and predicted perturbation radii. The entropy-effective predictions agree with the measured radii along the identity line. Errors remain near numerical precision for the entropy-effective formula and become large when the adjacent raw measure is used at later boundaries.}
\label{fig:stability}
\end{figure*}

The entropy-effective formula agreed with every measured radius with a maximum relative error of \(5.93\times10^{-9}\). Replacing \(a_b\) by \(m_{(b)}\) produced relative errors as large as 3.249. The adjacent-value expression succeeds at a first boundary with a homogeneous lower prefix because the terminal lower value equals the entropy-effective scale. Once earlier scales have entered the prefix, that equality disappears and the entropy-effective prefix state becomes essential.

This distinction is conceptually important. If stability were governed by the visible adjacent gap alone, the entropy-effective state reduction would be a first-boundary phenomenon. Instead, the same entropy-effective ratio that creates a later boundary also determines the amount of aligned erosion required for it to lose candidate status. The experiment therefore provides a multiscale test of the theory rather than only a numerical check of a two-level formula. Supplementary Note~S3.2 gives the complete configurations, and Supplementary Note~S3.3 compares the exact aligned radius with the broader continuity-based certificate.

\subsection{Discovering an unknown number of scales}
\label{sec:result-multiscale}

A local transition theory becomes a multiscale theory only if successive entropy-supported boundaries compose into an interpretable partition. We therefore generated measures with two, three, or four equal-size geometric levels, \(1,r,r^2,\ldots\), using \(N=480\) objects and lognormal multiplicative within-level jitter with log-scale standard deviation 0.03. One hundred independent realizations were generated for each combination of level count and adjacent ratio. A realization counted as successful only when SPINE recovered the exact number of generated scale levels, so partial recovery and overpartitioning were both counted as failures.

\begin{figure*}[t]
\centering
\includegraphics[width=0.96\textwidth]{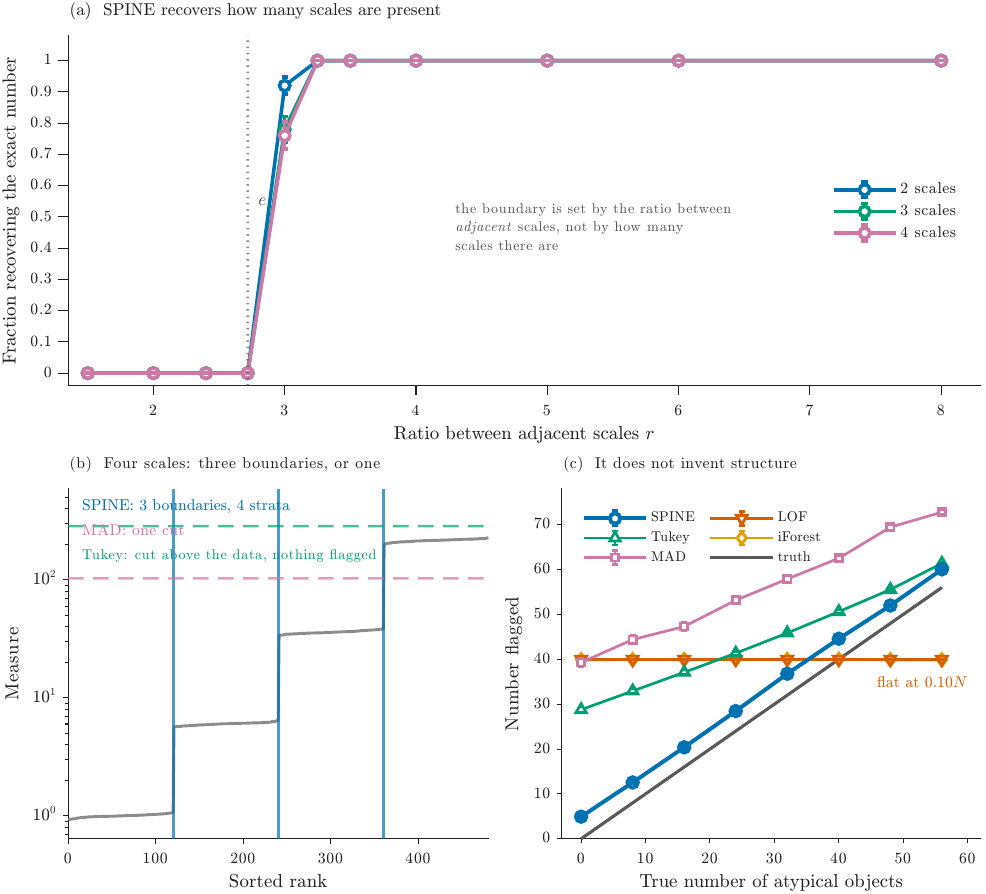}
\caption{\textbf{Scale structure is inferred rather than prescribed.} (a) Exact recovery probability for measures with two, three, and four generated geometric scale levels as the adjacent ratio \(r\) increases. The three recovery transitions nearly coincide. (b) In a representative four-level measure with \(r=6\), SPINE places three boundaries at the known level breaks. Standard single-cut rules cannot represent four strata without additional structure. (c) In a planar core-plus-annulus problem, the number assigned to the upper atypical scale changes with the specified atypical prevalence, while methods supplied with contamination 0.10 remain near 40 objects by construction.}
\Description{Three panels show exact recovery of multiple generated scales, a representative four-scale partition, and the number of atypical objects recovered as the specified prevalence changes. The scale-count recovery curves for two, three, and four levels are nearly coincident.}
\label{fig:multiscale}
\end{figure*}

Exact recovery was not observed for the tested ratios at or below 2.72, occurred in some realizations at \(r=3\), and occurred in all 100 realizations for all three level counts from \(r=3.25\) onward. At \(r=3\), the exact-recovery fractions were 0.92, 0.78, and 0.76 for two, three, and four scales, respectively. The recovery transition is therefore not identical across level counts near its onset, but the curves occupy the same narrow separation region and reach probability one at the same tested ratio.

The near coincidence is the notable feature. Within this controlled equal-size geometric family, adding separated scales does not force the required adjacent ratio to grow progressively with the total number of levels. This is consistent with a local picture in which each new boundary is resolved against the entropy-effective state of the prefix that precedes it. The experiment does not prove that arbitrary mixtures or unequal level sizes share this property, but it shows that successive local transitions can compose without an evident increase in the required separation attributable solely to the number of levels.

In the representative four-level realization in Fig.~\ref{fig:multiscale}(b), SPINE places boundaries at ranks 120, 240, and 360, exactly at the known level breaks. The comparison with single-cut robust rules \cite{tukey1977,hampel1974} is intentionally structural. A single cut can describe at most two groups, whereas the SPINE partition reports every entropy-supported transition encountered by the ordered measure. A multilevel thresholding procedure could, of course, be supplied with or optimized over several cuts, but that is a different formulation from inferring successive entropy-supported boundaries without first declaring how many levels are sought.

The value 3.25 is not a method constant. In the sensitivity sweep, the first fully successful grid point was 2.75 when the generated levels contained no within-level variation, close to the entropy-critical value, and increased as within-level spread increased. At logarithmic jitter standard deviations 0.01, 0.03, 0.06, 0.12, and 0.20, the corresponding recovery points were 3.00, 3.25, 3.50, 4.00, and 7.25. The practical resolution margin therefore reflects a competition between between-scale separation and within-scale variation. The critical relation provides the zero-spread reference, while within-level variation consumes part of the available separation margin.

Figure~\ref{fig:multiscale}(c) illustrates a related but distinct consequence. In a planar Gaussian core with a generated annular population, SPINE was supplied only with the 18th-nearest-neighbor distance as its measure under upper orientation. As the specified annular count changed from 0 to 16, 32, and 56, the mean number assigned to the upper scale changed from 4.9 to 20.4, 36.8, and 60.0. LOF \cite{breunig2000} and isolation forest \cite{liu2008} were each supplied with contamination 0.10 and therefore remained near 40 objects throughout. This is not a claim that one formulation dominates the others. The parameterized methods return the prevalence they are instructed to seek; SPINE instead returns the size of the scale supported by the supplied measure. The nonzero mean of 4.9 when no annular population is present also shows that finite-sample variation in the geometric measure can itself form a small upper scale. Additional comparisons, including cases in which the declared measure does not support a meaningful binary interpretation, are given in Supplementary Notes~S3.5 and S5.

\subsection{A dynamical consequence: a minimum accumulated separation}
\label{sec:result-lcs}

The critical ratio becomes especially interpretable when a positive scientific measure has an exponential representation over an observation horizon. We use the canonical double gyre as a controlled example and treat the largest finite-time singular-value expansion factor \(\sigma_{\max}(\bm x_0,T)\) of the flow-map gradient as the positive measure. The associated finite-time Lyapunov exponent is \(\Lambda(\bm x_0,T)=|T|^{-1}\ln\sigma_{\max}(\bm x_0,T)\). FTLE ridges have long been used as a practical diagnostic for hyperbolic finite-time coherent structures \cite{shadden2005}, while broader Lagrangian coherent-structure theory makes clear that coherent structures are not exhausted by scalar ridge thresholding \cite{haller2015}. Our use here is deliberately limited to the FTLE-based setting.

The expansion factor, rather than the FTLE itself, is the natural SPINE measure because it is positive and is exponentially related to the FTLE over the same observation horizon. Let \(T>0\) denote the magnitude of that horizon. Let the incoming expansion factor be \(c(T)=\exp[\Lambda_{\mathrm{in}}(T)T]\), and let \(a_{\mathrm{eff}}(T)\) be the entropy-effective scale of the preceding expansion-factor prefix. Because \(a_{\mathrm{eff}}(T)\) is the measure-weighted geometric mean of the positive prefix values, it can be written as \(a_{\mathrm{eff}}(T)=\exp[\Lambda_{\mathrm{eff}}(T)T]\), where \(\Lambda_{\mathrm{eff}}(T)=T^{-1}\ln a_{\mathrm{eff}}(T)\) is the entropy-effective prefix exponent. Hence the exact dimensionless incoming ratio is
\[
\rho(T)=\exp\!\left(\left[\Lambda_{\mathrm{in}}(T)-\Lambda_{\mathrm{eff}}(T)\right]T\right).
\]
Defining \(\Delta\Lambda_{\mathrm{eff}}(T)=\Lambda_{\mathrm{in}}(T)-\Lambda_{\mathrm{eff}}(T)\), Theorem~\ref{thm:critical} gives
\begin{equation}
\begin{aligned}
\Delta\Lambda_{\mathrm{eff}}(T)\,T
&>\ln\rhostar(\Neff(T)),\\
\ln\rhostar(\Neff)
&\longrightarrow 1
\quad\text{as}\quad
\Neff\to\infty.
\end{aligned}
\label{eq:dynamical-criterion}
\end{equation}
Thus an incoming deformation scale is entropy-resolvable when its accumulated logarithmic separation from the entropy-effective prefix exceeds the finite-size threshold \(\ln\rhostar(\Neff)\). In the large-effective-size regime, this threshold approaches one.

\begin{figure*}[p]
\centering
\includegraphics[width=0.92\textwidth]{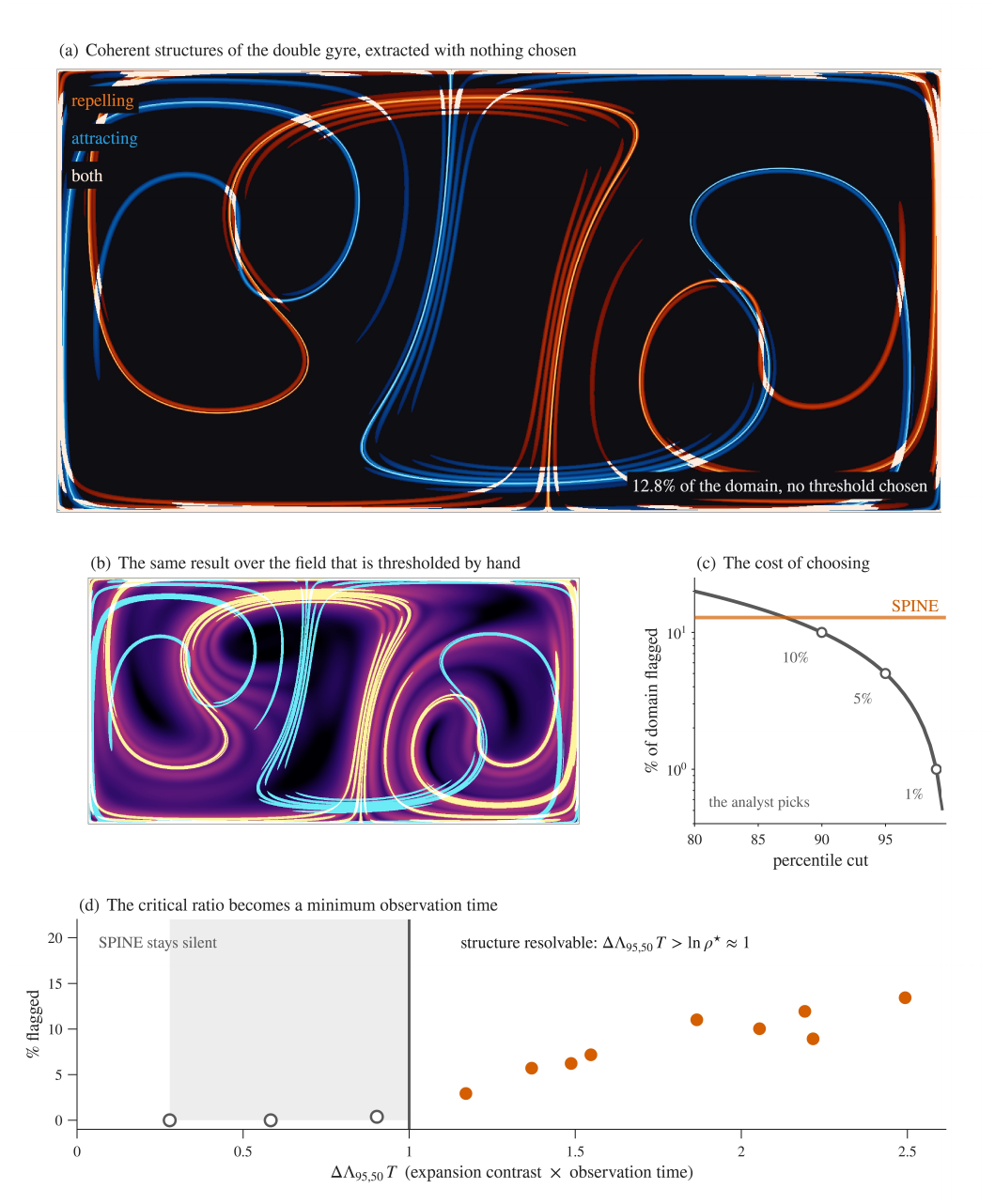}
\caption{\textbf{FTLE-based finite-time structure without a user-specified retention percentile in the double gyre.} (a) Repelling and attracting high-expansion structures inferred from the expansion-factor measures, occupying 12.85\% of the domain at the displayed horizon without prescribing a retained fraction. (b) The same inferred structures overlaid on the FTLE field. (c) Conventional percentile cuts retain exactly the fraction chosen by the analyst, while SPINE returns a measure-supported scale fraction. (d) A field-based accumulated FTLE contrast, defined independently of SPINE, enters the order-one regime as the high-expansion scale emerges. Open markers below 0.5\% retained fraction are a plotting convention, not an algorithmic threshold.}
\Description{Four panels show FTLE-based high-expansion structures in the double gyre, the structures overlaid on the FTLE field, the dependence of percentile thresholding on the chosen percentile, and the emergence of the SPINE high-expansion scale as the accumulated field contrast approaches one.}
\label{fig:lcs}
\end{figure*}

At the principal observation horizon, SPINE identifies a high-expansion stratum occupying 12.85\% of the computational domain. No retained fraction or percentile is supplied. By comparison, the 90th, 95th, and 99th percentile cuts retain 10\%, 5\%, and 1\% by construction. The point is not that one of those percentiles is intrinsically incorrect. They encode different externally selected questions. SPINE asks whether the expansion factors themselves support a distinct upper scale and returns the size of that scale as an output.

A horizon sweep provided an independent consistency check rather than a direct evaluation of Eq.~\eqref{eq:dynamical-criterion}. For each \(T\), a field-level contrast \(\Delta\Lambda_{95,50}(T)\) was defined as the 95th percentile minus the median of the FTLE field, without using the SPINE boundary. SPINE returned one stratum when the accumulated field contrast \(\Delta\Lambda_{95,50}(T)T\) was 0.279 or 0.583. A second scale first appeared in the sweep at an accumulated contrast of 0.903, containing only 0.38\% of the domain, and expanded to 2.93\% at a contrast of 1.171. It then grew to 6.23\%, 11.01\%, and 13.43\% at larger accumulated contrasts of 1.488, 1.866, and 2.493.

The exact theorem concerns \(\Delta\Lambda_{\mathrm{eff}}(T)T\), not \(\Delta\Lambda_{95,50}(T)T\). The percentile-minus-median statistic is deliberately independent of the detected boundary and is used only to ask whether the physical field enters an order-one accumulated-contrast regime when a distinct upper expansion scale begins to appear. The observed emergence in the neighborhood of one is therefore qualitatively consistent with the asymptotic transition, not a numerical verification of the exact criterion.

The result also gives a practical interpretation of the observation horizon. If a deformation contrast is weak, a short trajectory may not accumulate enough logarithmic separation for the expansion factors to form distinct scales, even if the underlying stretching rates differ. Increasing \(T\) can make the distinction resolvable, but \(\Delta\Lambda_{\mathrm{eff}}(T)\) itself changes with the horizon, so a long-time contrast cannot simply be multiplied by an arbitrary short \(T\). The relevant exact quantity is the finite-horizon product \(\Delta\Lambda_{\mathrm{eff}}(T)T\), with both factors evaluated over the same observation window. The choice of \(T\) remains a physical modeling decision; SPINE removes the additional extraction percentile, not the need to choose the dynamical observation horizon.

The extended validation is important because the example is spatially resolved. Forward and backward fields satisfy the expected symmetry of the double gyre to within \(4.201\times10^{-11}\), and the independently located SPINE cuts have a relative discrepancy of \(4.022\times10^{-14}\). Grid refinement from \(401\times201\) to \(1601\times801\) changes the retained fraction from 14.57\% to 12.85\%, while deterministic thinning of the large sorted measure changes the located cut and retained fraction by no more than 0.03\% over the tested thinning sizes. These checks, together with flow-parameter sensitivity, are reported in Supplementary Note~S3.7. The example complements our earlier work on coherent organization from image data \cite{almomani2020coherence} but does not reuse that algorithm or claim equivalence among different coherent-structure definitions.

\section{Discussion}
\label{sec:discussion}

The central contribution of SPINE is a local state reduction for measure-scale transitions. The pair \((\Neff,\aeff)\) compresses the normalized composition of an ordered measure prefix into two coordinates with complementary roles: the dimensionless effective size \(\Neff\) records how many equally weighted components would carry the same Shannon entropy, while the dimensional effective scale \(\aeff\) provides the multiplicative reference for the next incoming value. Prefixes with different raw cardinalities, arithmetic means, and internal distributions therefore have the same one-step entropy response whenever they share the same effective size and are evaluated at the same dimensionless incoming ratio. The geometric representation in Eq.~\eqref{eq:geometric-mean} shows how heterogeneity changes both the effective number of components and their characteristic measure scale. Corollary~\ref{cor:arithmetic-normalization} further shows that arithmetic normalization mixes these effects through the entropy deficit, whereas entropy-effective coordinates separate them and expose the common finite-size critical relation.

The appearance of \(e\) in this relation is exact, but asymptotic. It is derived from the entropy balance rather than fitted or imposed as an algorithmic threshold. At finite effective size, the critical ratio satisfies \(\rhostar(\Neff)>e\), and heterogeneity can make a large raw prefix respond like a much smaller effective system. Only as \(\Neff\to\infty\) does the balance reduce to \(\rho\ln\rho=\rho\), making \(e\) the point at which the diversification gained by admitting another effective component is exactly offset by concentration into that component. A fixed \(e\) cutoff applied to raw adjacent ratios would therefore discard both the finite-size dependence and the correct reference scale. This distinction becomes especially important at later boundaries, where the terminal value \(m_{(b)}\) need not represent the heterogeneous lower prefix, while \(a_b\) remains tied to its complete normalized composition. The stability experiments confirm this consequence: the adjacent-value expression is exact for a homogeneous lower prefix but can develop order-one errors at later heterogeneous boundaries.

The stability theorem reveals that boundary formation and aligned robustness are governed by the same critical surface. At a candidate boundary, the right-side entropy-decrease condition is equivalent to \(\rho_b>\rhostar(x_b)\), and the exact aligned erosion radius is determined by the distance of \(\rho_b\) above that critical value. Proposition~\ref{prop:order-margin} further establishes that the entropy-decrease condition fails before the adjacent measure gap closes under aligned erosion. A candidate only slightly beyond criticality is therefore valid according to the deterministic boundary rule but has a small perturbation margin, whereas a boundary farther beyond criticality has a larger certified margin. Boundary existence and boundary strength are thus two consequences of the same local entropy geometry. The radius is a deterministic perturbation quantity, not a probability or statistical confidence level, and it requires no separate robustness parameter or secondary calibration.

The multiscale experiment provides evidence that these local transitions can compose without an evident resolution penalty caused solely by the number of levels present. Within the tested equal-size geometric family, measures with two, three, and four generated scales enter nearly the same narrow recovery regime. This is an empirical composition result rather than a general recovery theorem. The present mathematics controls the sign of each local entropy increment and the aligned stability of a specified candidate split, but it does not establish consistent recovery for arbitrary unequal, overlapping, or nongeometrically spaced strata. The result nevertheless demonstrates that successive entropy-supported boundaries can form an interpretable multiscale partition without requiring the number of scales to be specified in advance.

The local construction also explains why SPINE can return one stratum. If the nested entropy profile produces no valid candidate boundary, the appropriate output is a single unresolved measure scale rather than a forced partition. Conversely, a scale-free measure may span a large dynamic range without supporting a preferred semantic division. Supplementary Note~S5 develops both cases. The absence of a boundary is therefore an interpretable outcome, not automatically an algorithmic failure. This feature acts as a safeguard against confusing scale partitioning with the assignment of externally desired labels.

The choice of measure is where domain science enters. SPINE does not remove scientific modeling; it separates measure construction from scale inference. A nearest-neighbor distance asks a geometric isolation question, a frequency asks a rarity question, a node degree asks a connectivity question, a singular value asks a spectral-scale question, and a finite-time expansion factor asks a deformation question. The same entropy construction can analyze each measure, but the resulting strata do not have interchangeable meanings. Supplementary Note~S3.8 illustrates this distinction across geometry, category counts, networks, and spectra. Orientation carries the same scientific responsibility. It does not change the discovered partition, but determines whether larger values, smaller values, or departures in both directions are interpreted as increasingly atypical relative to the designated typical stratum.

SPINE occupies a distinct position relative to thresholding, anomaly detection, and clustering. When only one boundary is present, its output can be expressed as a threshold, but that threshold is a consequence of an entropy-supported transition rather than the primary optimization variable. An upper or lower stratum may be interpreted scientifically as atypical, but SPINE does not begin with a probabilistic contamination model or a target prevalence. It can also produce several strata, but it does not optimize a distance-based cluster representation of the original objects. These alternatives remain valid for the questions they are designed to answer. SPINE addresses a different question: whether a declared scalar measure contains entropy-resolvable multiplicative scales.

SPINE has a direct mathematical lineage. The ascending accumulated-prefix entropy curve, together with its use in separating frequency-ranked typical and atypical states, was introduced in our 2020 symptom-informativity study \cite{almomani2020symptoms}. What was application-specific and heuristic there is made general and analytic here. The present theory identifies the entropy-effective state of any prefix with positive total measure, derives its exact finite-size critical relation and asymptotic constant, converts entropy-supported transitions into candidate measure-scale boundaries, and determines their exact aligned stability margins. 

An entropy-supported boundary is a property of the finite ordered measure, not automatically a certificate of a latent mixture component, physical regime, or statistically significant change point. A smooth one-component heavy-tailed sample can itself contain candidate transitions because extreme order statistics may become large relative to the entropy-effective state of the preceding prefix. Similarly, SPINE cannot recover a scientific distinction that is absent from the declared measure. Physical or statistical interpretation therefore requires assumptions connecting the measure and its discovered scales to the scientific system. SPINE determines the scale structure supported by the measure; it does not independently establish the semantic origin of that structure.

Additional limitations concern the geometry of practical resolution. Zero-entropy prefixes create structural edge cases: an all-zero lower stratum and a positive singleton lower stratum cannot produce the required entropy decline at their first positive transition. Within-scale spread also competes with between-scale separation, so recovery of a noisy generated level requires enough margin for internal variation not to erase the candidate transition. Under sufficiently strong correlated deformation, the observed measure may develop competing scales that no longer correspond to the originally intended structure. These behaviors, derived and tested in Supplementary Note~S5, do not alter the exact critical relation for a given ordered prefix. They delimit when an entropy-supported finite-sample partition can be identified with a particular generated or physical structure.

The finite-time application points to a broader consequence for positive measures that accumulate multiplicatively. When a measure can be represented as \(m(T)=\exp[\Lambda(T)T]\), a ratio of measure scales becomes an accumulated difference between logarithmic growth rates. Equation~\eqref{eq:dynamical-criterion} then gives a minimum accumulated separation relative to the entropy-effective prefix. The double gyre provides a controlled illustration of this relation, but the mathematical consequence is not specific to that flow. Similar reasoning may apply when amplification, decay, or growth generates the positive measure being partitioned, provided that the measure and its logarithmic rate remain scientifically meaningful. The application nevertheless concerns FTLE-based scale extraction and should not be interpreted as a universal definition of Lagrangian coherent structure.

Several theoretical extensions follow naturally from the present results. The population limit of the centered nested entropy profile, developed under integrability conditions in Supplementary Note~S1.11, opens a route from finite ordered measures to distribution-level scale geometry. The general preservation theorem based on entropy continuity supplies guarantees beyond aligned perturbations, while the certificate study shows how incorporating known perturbation structure can sharpen those guarantees. A further open problem is to determine conditions under which successive entropy-effective states characterize a global scale partition and yield convergence guarantees for candidate locations. These directions would extend the exact local theory established here into a broader statistical theory of multiscale recovery.

SPINE should therefore be viewed as a theory first and an algorithm second. The deterministic algorithm directly realizes the mathematical construction: order the declared measure, evaluate the nested entropy profile, detect valid candidate boundaries, and map them back to object strata. The public Python and MATLAB implementations make this construction reproducible and convenient, but the scientific claims do not depend on software-specific conventions. The Supplementary Material separates the mathematical specification, extended evidence, and numerical conformance requirements so that the theory can be implemented and verified independently across computational platforms.

\section{Conclusion}
\label{sec:conclusion}

SPINE provides a deterministic, measure-oriented framework for discovering entropy-supported scale structure in an ordered nonnegative measure. It identifies an un-predefined number of scale levels without requiring a user-specified cut, group count, contamination fraction, or target prevalence. The resulting boundaries define a partition of the original objects, retain multiple supported transitions, and return a single unresolved stratum when the measure does not support a separation.

The framework is supported by an exact analytical foundation. The entropy-effective state reduces a heterogeneous prefix to the two quantities that govern its next transition, yielding a common finite-size critical relation whose asymptotic limit is \(e\). The same state determines the correct reference scale for an incoming value and the exact aligned stability radius of every detected boundary. Scale detection and boundary robustness are therefore derived from one entropy construction rather than introduced through separate rules or calibrated parameters.

The numerical evidence confirms these theoretical strengths across heterogeneous prefix families, first and later multiscale boundaries, measures with an unknown number of generated levels, and a spatially resolved finite-time deformation problem. Independently reproducible Python and MATLAB implementations produce consistent results across computational platforms. SPINE therefore converts a declared scientific measure into an interpretable and reproducible scale partition, providing a general route for exposing multiscale organization in complex systems, networks, geometric data, spectra, and dynamical processes.

\section*{Supplementary Material}
The Supplementary Material contains complete derivations and extended proofs, including the higher-order critical-ratio expansion, ordered admissibility, the exact order-margin argument, general perturbation certificates, and the population profile; the mathematical specification of SPINE-C and the optional SPINE-S selector; extended numerical results for admissibility, stability, rank selection, multiscale recovery, sensitivity, finite-time deformation, and measure choice; and a concise cross-platform reproducibility specification.

\section*{Acknowledgments}

\section*{Author Declarations}

\subsection*{Conflict of Interest}
The author declares no conflicts of interest.

\section*{Data Availability}
The source code, numerical data, fixed experimental inputs, and Python and MATLAB workflows required to reproduce the results reported in this study are publicly available at \url{https://github.com/almomaa/entropy-spine}. Complete documentation is available at \url{https://almomaa.github.io/entropy-spine/}. All reported examples use computationally generated benchmark systems; no proprietary or restricted data are required.

\bibliography{SPINE_references}

\end{document}